\numberwithin{equation}{section}
\newtheorem{theorem}{Theorem}[section]
\newtheorem{lemma}[theorem]{Lemma}
\newtheorem{proposition}[theorem]{Proposition}
\newtheorem{corollary}[theorem]{Corollary}
\theoremstyle{definition}
\newtheorem{definition}[theorem]{Definition}
\newtheorem{example}[theorem]{Example}
\theoremstyle{remark}
\numberwithin{equation}{section}
\newcommand{\C}{\mathbb{C}}
\newcommand{\R}{\mathbb{R}}
\newcommand{\T}{\mathbb{T}}
\newcommand{\Z}{\mathbb{Z}}
\newcommand{\lcm}{\operatorname{lcm}}
\newcommand{\diag}{\operatorname{diag}}
\newcommand{\codim}{\operatorname{codim}}
\newcommand{\co}{\colon\thinspace}
\newcommand{\bs}{\boldsymbol}
\newcommand{\Imaginary}{\operatorname{Im}}
\begin{document}

\title{Constructing symplectomorphisms between symplectic torus quotients}

\author[H.-C.~Herbig]{Hans-Christian Herbig}
\address{Departamento de Matem\'{a}tica Aplicada,
Av. Athos da Silveira Ramos 149, Centro de Tecnologia - Bloco C, CEP: 21941-909 - Rio de Janeiro, Brazil}
\email{herbighc@gmail.com}

\author[E.~Lawler]{Ethan Lawler}
\address{Department of Mathematics \& Statistics,
Dalhousie University,
6316 Coburg Road,
PO BOX 15000,
Halifax, Nova Scotia,
Canada B3H 4R2}
\email{lawlerem@gmail.com}

\author[C.~Seaton]{Christopher Seaton}
\address{Department of Mathematics and Computer Science,
Rhodes College, 2000 N. Parkway, Memphis, TN 38112, USA}
\email{seatonc@rhodes.edu}

\thanks{H.-C.H. was supported by CNPq through the \emph{Plataforma Integrada Carlos Chagas},
E.L. was supported by a Rhodes College Research Fellowship,
and C.S. was supported by the E.C.~Ellett Professorship in Mathematics.}
\keywords{symplectic reduction, singular symplectic quotient, Hamiltonian torus action, graded regular symplectomorphism}
\subjclass[2010]{Primary 53D20; Secondary 13A50, 14L30}

\begin{abstract}
We identify a family of torus representations such that the corresponding singular symplectic quotients at
the $0$-level of the moment map are graded regularly symplectomorphic to symplectic quotients associated
to representations of the circle. For a subfamily of these torus representations, we give an explicit
description of each symplectic quotient as a Poisson differential space with global
chart as well as a complete classification of the graded regular diffeomorphism and symplectomorphism
classes. Finally, we give explicit examples to indicate that symplectic quotients in this class may
have graded isomorphic algebras of real regular functions and graded Poisson isomorphic complex symplectic quotients
yet not be graded regularly diffeomorphic nor graded regularly symplectomorphic.
\end{abstract}

\maketitle
\tableofcontents


\section{Introduction}
\label{sec:Intro}

Let $G$ be a compact Lie group and $G\to \mathrm{U}(V)$ a finite dimensional unitary representation of $G$. Here $\mathrm{U}(V)$ stands for the unitary group of $V$, i.e.
the group of automorphisms preserving the hermitian inner product $\langle\cdot,\cdot\rangle$. To describe the orbit space $V/G$, i.e. the space of $G$-orbits in $V$, invariant theory is employed as follows. There exists a system of fundamental real homogeneous polynomial invariants $\phi_1,\phi_2,\dots,\phi_m$; we refer to the system $\phi_1,\phi_2,\dots,\phi_m$ as a \emph{Hilbert basis}. This means that any real invariant
polynomial $f\in \mathbb R[V]^G$ can be written as a polynomial in the $\phi$'s, i.e. there exists a polynomial $g\in\mathbb R[x_1,x_2,\dots,x_m]$ such that $f=g(\phi_1,\phi_2,\dots,\phi_m)$.
More generally, by a theorem of G. W. Schwarz
\cite{SmoothInv}, for any smooth function $f\in \mathcal C^\infty(V)^G$ there exists $g\in\mathcal C^\infty(\mathbb R^m)$ such that $f=g(\phi_1,\phi_2,\dots,\phi_m)$. The vector-valued map $\phi=(\phi_1,\phi_2,\dots,\phi_m)$ gives rise to an embedding $\overline{\phi}$ of $V/G$ into euclidean space  $\mathbb R^m$, which is called the \emph{Hilbert embedding}.
We denote its image by $X:=\phi(V)$.  It turns out that
$\overline{\phi}$ is actually a \emph{diffeomorphism} onto $X$, i.e. the pullback $\overline{\phi}^\ast$
via $\overline{\phi}$ induces an isomorphism of algebras
$\mathcal C^\infty(X):=\{g \co X\to \mathbb R\mid \exists G\in\mathcal C^\infty(\mathbb R^m) \colon g=G_{|X}\}$
and $\mathcal C^\infty(V/G):=\mathcal C^\infty(V)^G$. Moreover, the restriction of
$\overline{\phi}^\ast$ to the subalgebra $\mathbb R[X]:=\{g\co X\to \mathbb R\mid \exists G\in
\mathbb R[x_1,x_2,\dots,x_m]  \colon g=G_{|X}\}$ isomorphically to $\mathbb R[V/G]:=\mathbb R[V]^G$ preserving the grading. Here we use the natural grading $\deg({x_i}):=\deg({\phi_i})$.
We say that $\overline{\phi}^\ast$ is a \emph{graded regular diffeomorphism}. The algebra $\mathbb R[X]$ can be understood as the quotient of $\mathbb R[x_1,x_2,\dots,x_m]$ by the kernel of the restriction map, which we refer to as the ideal of \emph{off-shell relations}. Its generators are assumed to be homogeneous in the natural grading. The real variety underlying $\mathbb R[X]$ is the Zariski closure $\overline{X}$ of $X$ inside $\mathbb R^m$. The space $X$ itself is not a real variety but a \emph{semialgebaic set}. How the inequalities cutting out $X$ inside $\overline{X}$ are obtained has been explained in \cite{ProcesiSchwarz}.

The hermitian vector space $V$ is equipped with the symplectic form $\omega=\mathrm{Im}\langle\cdot,\cdot\rangle$ obtained be taking the imaginary part of hermitian inner product. Moreover, the action of $G$ on
$V$ is Hamiltonian and admits a unique homogeneous quadratic moment map $J\co V\to\mathfrak{g}^\ast$
where $\mathfrak{g}^\ast$ denotes the dual of the Lie algebra $\mathfrak{g}$ of $G$.
The zero fibre $Z:=J^{-1}(0)$ of $J$ is referred to as the \emph{shell}. It is a real subvariety of $V$ with a conical singularity at the origin. Due to the $G$-equivariance of  $J$ the group $G$ acts on $Z$. The space $M_0:=Z/G$ of $G$-orbits in $Z$ is called the (linear) \emph{symplectic quotient}. By the work Sjamaar and Lerman \cite{SjamaarLerman} the smooth
structure $\mathcal C^\infty(M_0)$ is given by the quotient $\mathcal C^\infty(V)^G/\mathcal I_Z^G$ where $\mathcal I_Z^G$ is the invariant part of the vanishing ideal $\mathcal I_Z:=\{f\in \mathcal C^\infty(V)\mid f_{|Z}=0\}$. Note that $\mathcal C^\infty(M_0)$ is in a canonical way a Poisson algebra containing the Poisson subalgebra $\mathbb R[M_0]:=\mathbb R[V]^G/I_Z^G$, where $I_Z^G:=\mathcal I_Z\cap \mathbb R[V]^G$.
The image $Y:=\phi(Z)$ of $Z$ under the Hilbert map is a semialgebraic subset of $X$.
Its Zariski closure $\overline{Y}$ is described by the generators of the kernel in $ \mathbb R[x_1,x_2,\dots,x_m]$ of the algebra morphism $x_i\mapsto {\phi_i}_{|Z}\in \mathcal C^\infty(M_0)$. We refer to it as the ideal of \emph{on-shell} relations.
The inequalities that cut out $Y$ from $\overline{Y}$ are the same as those cutting out $X$ from $\overline{X}$.

Let us now assume that we have two symplectic quotients $M_0$ and $M_0^\prime$ constructed from
the representations $G\to \mathrm U(V)$ and $G^\prime\to \mathrm U(V^\prime)$, respectively. By a \emph{symplectomorphism}
between $M_0$ and $M_0^\prime$ we mean a homeomorphism $F\co M_0\to M_0^\prime$ such that the pullback $F^\ast$
is an isomorphism of Poisson algebras $F^\ast\co\mathcal C^\infty(M_0^\prime)\to \mathcal C^\infty(M_0)$.
We say that $F$ is \emph{regular} if $F^\ast(\mathbb R[M_0^\prime])\subseteq\mathbb R[M_0]$. A regular symplectomorphism is called \emph{graded regular} if the map $(F^\ast)_{|\mathbb R[M_0^\prime]}\co \mathbb R[M_0^\prime]\to\mathbb R[M_0]$ preserves the grading. By the Lifting Theorem  of \cite{FarHerSea}, an isomorphism
$f\co \mathbb R[M_0^\prime]\to\mathbb R[M_0]$ of Poisson algebras gives rise to a unique symplectomorphism if it compatible with the inequalities.

 When $G = \T^\ell$ is a torus, a representation $V$ of complex dimension $n$ can be described in terms
of a weight matrix $A \in \Z^{\ell\times n}$; we use $M_0(A)$ to denote the symplectic quotient associated
to the representation with weight matrix $A$.
In \cite[Theorem 7]{FarHerSea}, it is demonstrated that for a weight matrix of the form
$A = [D|C]$ where $D$ is an $\ell\times\ell$ diagonal matrix with strictly negative entries on the
diagonal and $C$ is an $\ell\times 1$ matrix with strictly positive entries, the corresponding symplectic
quotient by $\T^\ell$ is graded regularly symplectomorphic to the symplectic orbifold $\C/\Z_\eta$ where
$\eta = \eta(A)$ is a quantity determined by the entries of $A$; see Definition~\ref{def:TypeAlpha}.
However, based on the explicit description of the ring
$\R[\C]^{\Z_\eta}$ of real regular functions on the orbifold $\C/\Z_\eta$ given in the proof of \cite[Theorem 7]{FarHerSea},
it is easy to see that $\R[\C]^{\Z_{\eta_1}}$ and $\R[\C]^{\Z_{\eta_2}}$ are isomorphic as algebras over $\R$
if and only if $\eta_1 = \eta_2$. Hence, an immediate corollary of \cite[Theorem 7]{FarHerSea} is the following.

\begin{corollary}
\label{cor:SIGMASympCrit}
For $i = 1,2$, let $A_i = [D_i|C_i]$ where each $D_i$ is an $\ell_i\times\ell_i$ diagonal matrix with strictly
negative entries on the diagonal and each $C_i$ is an $\ell_i\times 1$ matrix with strictly positive entries.
Then the symplectic quotients $M_0(A_1)$ and $M_0(A_2)$ are regularly diffeomorphic if and only if
$\eta(A_1) = \eta(A_2)$, in which case they are graded regularly symplectomorphic.
\end{corollary}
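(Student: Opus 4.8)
The plan is to treat this as a formal consequence of \cite[Theorem 7]{FarHerSea} together with the algebraic observation recorded immediately above the statement, namely that the $\R$-algebra $\R[\C]^{\Z_{\eta_1}}$ remembers $\eta_1$ up to equality. The essential point is that Theorem 7 already produces, for each weight matrix $A_i = [D_i|C_i]$ of the prescribed shape, a graded regular symplectomorphism $\Psi_i \co M_0(A_i) \to \C/\Z_{\eta(A_i)}$, and in the course of its proof identifies the graded Poisson algebra $\R[M_0(A_i)]$ with $\R[\C]^{\Z_{\eta(A_i)}}$. So I would reduce every comparison between $M_0(A_1)$ and $M_0(A_2)$ to a comparison of the two orbifolds $\C/\Z_{\eta(A_1)}$ and $\C/\Z_{\eta(A_2)}$.

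For the ``if'' direction together with the final clause, I would assume $\eta(A_1) = \eta(A_2) =: \eta$ and simply compose, forming $\Psi_2^{-1} \circ \Psi_1 \co M_0(A_1) \to M_0(A_2)$. Since graded regular symplectomorphisms are closed under composition and inversion---the conditions of being a Poisson isomorphism, of carrying regular functions to regular functions, and of preserving the grading all pass to inverses and composites---this composite is again a graded regular symplectomorphism. In particular it is a regular diffeomorphism, so $M_0(A_1)$ and $M_0(A_2)$ are simultaneously graded regularly symplectomorphic and regularly diffeomorphic. This disposes of sufficiency and of the ``in which case'' assertion at once.

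For the ``only if'' direction I would start from a regular diffeomorphism $F \co M_0(A_1) \to M_0(A_2)$; its pullback restricts to an isomorphism of $\R$-algebras $F^\ast \co \R[M_0(A_2)] \to \R[M_0(A_1)]$ (here one uses that $M_0(A_1)$ and $M_0(A_2)$ being regularly diffeomorphic means both $F$ and $F^{-1}$ are regular, so that $F^\ast$ carries $\R[M_0(A_2)]$ isomorphically onto $\R[M_0(A_1)]$). Transporting through the two identifications $\R[M_0(A_i)] \cong \R[\C]^{\Z_{\eta(A_i)}}$ furnished by Theorem 7 yields an $\R$-algebra isomorphism $\R[\C]^{\Z_{\eta(A_1)}} \cong \R[\C]^{\Z_{\eta(A_2)}}$, and the cited observation then forces $\eta(A_1) = \eta(A_2)$.

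I expect no serious obstacle, since all of the analytic and algebraic content is imported from \cite[Theorem 7]{FarHerSea}; the argument is essentially a diagram chase. The one point deserving care is conceptual rather than technical: ``regularly diffeomorphic'' is an a priori far weaker relation than ``graded regularly symplectomorphic'', as a regular diffeomorphism need neither respect the grading nor the Poisson bracket. The force of the corollary is that the bare isomorphism type of the $\R$-algebra of regular functions already pins down the invariant $\eta$, and once $\eta$ is determined Theorem 7 upgrades this weak equivalence all the way back to a graded regular symplectomorphism.
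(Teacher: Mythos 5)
Your proposal is correct and takes essentially the same route as the paper: the corollary is presented there as an immediate consequence of \cite[Theorem 7]{FarHerSea} (providing the graded regular symplectomorphisms $M_0(A_i) \to \C/\Z_{\eta(A_i)}$) combined with the observation that $\R[\C]^{\Z_{\eta_1}}$ and $\R[\C]^{\Z_{\eta_2}}$ are isomorphic as $\R$-algebras if and only if $\eta_1 = \eta_2$. Your composition argument for sufficiency and the transport of the algebra isomorphism for necessity are precisely the implicit content of the paper's ``immediate corollary'' claim.
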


More recently, it was shown in \cite[Theorem 1.1]{HerbigSchwarzSeaton} that for general symplectic quotients,
symplectomorphisms with symplectic orbifolds are rare, even if the graded regular
requirements are dropped; see also \cite{HerbigSeaton2}. Hence, one cannot use
isomorphisms with quotients by finite groups to approach a more general classification
of higher-dimensional symplectic quotients by tori.

In this paper, we give a generalization of Corollary~\ref{cor:SIGMASympCrit} as a step towards a general
classification of linear symplectic quotients by tori into (graded) regular symplectomorphism classes. While
Corollary~\ref{cor:SIGMASympCrit} addresses a class of symplectic quotients by tori that can be reduced
to quotients by finite groups, we consider here a class of symplectic quotients by tori that are graded
regularly symplectomorphic to symplectic quotients by the circle $\T^1$. To state our main result,
we say that a weight matrix
$A \in\Z^{\ell\times(\ell+k)}$ is \emph{Type II$_k$} if it can be expressed in the form
$A = [D, c_1\bs{n},\ldots, c_k\bs{n}]$ with $D$ a diagonal matrix with strictly negative diagonal entries,
$\bs{n}$ a column matrix with strictly positive entries, and each $c_r \geq 1$. Our main result is that the
symplectic associated to a Type II$_k$ matrix of any size is graded regularly symplectomorphic to a symplectic
quotient by $\T^1$. Specifically, we have the following; see Definition~\ref{def:TypeAlpha} for the definitions
of $\alpha$ and $\beta$.

\begin{theorem}
\label{thrm:MainSymplectomorph}
Let $A\in\Z^{\ell\times (\ell+k)}$ be the Type II$_k$ matrix of a faithful $\T^\ell$-representation
$V$ of dimension $n = \ell+k$. Then the symplectic quotient $M_0(A)$ is graded regularly symplectomorphic to
the $\T^1$-symplectic quotient $M_0(B)$ where
$B = \big(-\alpha(A), c_1\beta(A),\ldots, c_k\beta(A)\big) \in\Z^{1\times(k+1)}$.
\end{theorem}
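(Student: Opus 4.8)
The plan is to finish with the Lifting Theorem of \cite{FarHerSea}: it suffices to produce a graded isomorphism of Poisson algebras $F^\ast\colon\R[M_0(B)]\to\R[M_0(A)]$ that is compatible with the inequalities, and this will then be realized by a unique graded regular symplectomorphism $F\colon M_0(A)\to M_0(B)$. Write the coordinates on $V$ as $z_1,\dots,z_\ell$ for the columns of $D$ and $w_1,\dots,w_k$ for the columns $c_r\bs{n}$, with $d_a<0$ the diagonal entries of $D$ and $n_a>0$ the entries of $\bs{n}$. The $a$-th component of the quadratic moment map is proportional to $d_a\lvert z_a\rvert^2+n_a\sum_r c_r\lvert w_r\rvert^2$, so the shell $Z$ is cut out by $\lvert z_a\rvert^2=(n_a/\lvert d_a\rvert)\sum_r c_r\lvert w_r\rvert^2$ for every $a$. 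The decisive structural fact is that on $Z$ each $\lvert z_a\rvert^2$ is a fixed positive multiple of $P:=\sum_r c_r\lvert w_r\rvert^2$; thus the moduli of the $z_a$ carry no information independent of the $w_r$, and $Z\setminus\{0\}$ has all $z_a\ne 0$. The $\T^1$-side is the exact analogue, with coordinates $u$ (weight $-\alpha$) and $v_1,\dots,v_k$ (weights $c_r\beta$) and shell $\lvert u\rvert^2=(\beta/\alpha)\sum_r c_r\lvert v_r\rvert^2$.

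First I would record explicit Hilbert bases for $\R[V]^{\T^\ell}$ and for the $\T^1$-representation, together with their off-shell and on-shell relation ideals; these invariants are spanned by the weight-zero monomials and fall into three families. The squared moduli $\lvert z_a\rvert^2$ (resp.\ $\lvert u\rvert^2$) are redundant on the shell by the constraint above. The monomials built purely from the $w_r$ are invariant exactly when $\sum_r c_r(s_r-t_r)=0$, and the monomials built purely from the $v_r$ are governed by the identical condition because $\bs{n}$ and $\beta$ factor out uniformly; hence $w_r\leftrightarrow v_r$ identifies these two families generator-for-generator while preserving degree. The remaining generators are the bridging monomials: $\prod_a z_a^{p_a}\prod_r w_r^{t_r}$ is $\T^\ell$-invariant precisely when $\lvert d_a\rvert p_a=n_a\sum_r c_r t_r$ for all $a$, while $u^{p}\prod_r v_r^{t_r}$ is $\T^1$-invariant precisely when $\alpha p=\beta\sum_r c_r t_r$. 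The role of Definition~\ref{def:TypeAlpha} is that $\alpha(A)$ and $\beta(A)$ are chosen exactly so that the minimal bridging invariants on the two sides correspond in degree and so that their norm relations, expressing $\lvert(\text{bridge})\rvert^2$ as a polynomial in $P$ (resp.\ $Q:=\sum_r c_r\lvert v_r\rvert^2$) and the pure $w$-invariants (resp.\ the pure $v$-invariants), are carried into one another.

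With this dictionary in hand I would define $F^\ast$ on generators by matching the pure families through $w_r\leftrightarrow v_r$ up to the positive scalings forced by the two moment-map constraints, and by sending each minimal $u$-$v$ bridge to the corresponding $z$-$w$ bridge. One then checks that $F^\ast$ respects every off-shell and on-shell relation, so that it is a well-defined graded $\R$-algebra isomorphism, and that it is Poisson: the brackets induced by $\omega=\Imaginary\langle\cdot,\cdot\rangle$ close on the Hilbert-basis generators as explicit polynomials in those generators, and the proportionality of the $c_r$-weights on each side forces the bracket of two images to equal the image of the bracket. Finally, the inequalities cutting $Y=\phi(Z)$ out of $\overline{Y}$ are inherited from those cutting $X$ out of $\overline{X}$ (positivity of the moduli together with the positive-semidefiniteness conditions on the monomial invariants, as in \cite{ProcesiSchwarz}), and under the dictionary these become the same system up to the positive scalings; hence $F^\ast$ carries the positivity locus of $M_0(B)$ onto that of $M_0(A)$ and the Lifting Theorem applies.

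The step I expect to be the main obstacle is the bridge. Everything else is either the verbatim identification of the pure $w$/$v$ data or a bookkeeping of scalings, but the bridging invariants require the single arithmetic datum $\big(\alpha(A),\beta(A)\big)$ to do three jobs at once: make the minimal bridge degrees agree, make the norm relations correspond, and make the Poisson brackets transform correctly. Pinning down $\alpha(A)$ and $\beta(A)$ so precisely that all three hold simultaneously---equivalently, so that the residual cyclic actions on the $w$- and $v$-coordinates, after using the torus to fix the phases of the $z_a$ (resp.\ $u$), are identified---is where the delicate number theory of the entries of $A$ enters, and is the crux of the argument.
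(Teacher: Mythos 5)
Your proposal is the constructive strategy---explicit Hilbert bases, relation ideals, and a generator-by-generator dictionary---which is precisely the approach the paper itself carries out only for Type I$_k$ matrices (all $c_r=1$) in Section~\ref{sec:ConstrucProof}, and it does not go through as sketched for general Type II$_k$ because every hard step is deferred. Your structural observations are correct: the pure $w$- and $v$-invariants are governed by the identical condition $\sum_r c_r(s_r-t_r)=0$, and the minimal bridging invariants $\prod_a z_a^{m_a}\prod_r w_r^{s_r}\overline{w_r}^{t_r}$ and $u^{\beta}\prod_r v_r^{s_r}\overline{v_r}^{t_r}$ have matching degree $\beta+\sum_r(s_r+t_r)$. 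But the sentence ``one then checks that $F^\ast$ respects every off-shell and on-shell relation \ldots and that it is Poisson'' is where the entire content of the theorem lies, and for unequal $c_r$ there is no analogue of Propositions~\ref{prop:ConstrucHBasis} and~\ref{prop:ConstrucRelations} to check a dictionary against: already the pure invariants (e.g.\ for $c=(1,2)$ the generators $w_1\overline{w_1}$, $w_2\overline{w_2}$, $w_1^2\overline{w_2}$, $w_2\overline{w_1}^2$) form a nontrivial toric ring whose minimal generators and relations depend on the arithmetic of the $c_r$ and admit no closed-form list---indeed, for the $1\times 3$ Type II$_2$ matrices of Section~\ref{sec:HilbCounterex} the paper resorts to \emph{Macaulay2} to compute the relation ideals. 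Your dictionary also needs exact normalizing constants, not merely ``positive scalings'': even in the Type I$_k$ case the bridge must carry the factor $\sqrt{\beta^{-\beta}\prod_j m_j^{m_j}}$ (Theorem~\ref{thrm:Construc}) in order for the norm relation $\prod_i r_i^{m_i}\prod_j p_{g_j,h_j}=q_{\bs{s}}\overline{q_{\bs{s}^\prime}}$ and the bracket $\{q_{\bs{s}},\overline{q_{\bs{s}^\prime}}\}$ of Proposition~\ref{prop:ConstrucPoisson} to be preserved. You correctly name the bridge as the crux, but you leave it unresolved, so what you have is a program rather than a proof.

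For contrast, the paper's proof of Theorem~\ref{thrm:MainSymplectomorph} sidesteps explicit invariant theory entirely. It writes down the linear symplectic embedding $\phi\co V_B\to V_A$ sending $z_1\mapsto(\sqrt{m_1/\beta}\,z_1,\ldots,\sqrt{m_\ell/\beta}\,z_1)$ and fixing the remaining coordinates (Lemma~\ref{lem:SympEmbedShell})---the geometric incarnation of your observation that on the shell each $|z_a|^2$ is a fixed positive multiple of $\sum_r c_r|w_r|^2$---then complexifies and proves that $S=\phi_\C(N_B)$ is a Seshadri section for $(\C^\times)^\ell$ acting on $N_A$ with $\codim_{N_A}\overline{N_A\smallsetminus(\C^\times)^\ell S}=2$ (Lemma~\ref{lem:SheshadriSection}). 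Popov's theorem \cite{PopovSeshadri,PopovVinberg} then yields the isomorphism $\C[N_A]^{(\C^\times)^\ell}\to\C[S]^{\mathbf{N}(S)}$ with no need to know generators or relations; $1$-largeness \cite{HerbigSchwarz,HerbigSchwarzSeaton2} descends this to a graded Poisson isomorphism $\R[M_0(A)]\to\R[M_0(B)]$ (Corollary~\ref{cor:Restriction}); and Lemma~\ref{lem:Semialg} supplies the homeomorphism, i.e.\ the semialgebraic compatibility that your Lifting-Theorem endgame requires. To repair your argument, you would either have to restrict to Type I$_k$ and reproduce the computations of Section~\ref{sec:ConstrucProof}, or replace the deferred verification at the bridge with a section-theoretic argument of exactly this kind.
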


Theorem~\ref{thrm:MainSymplectomorph} can be thought of as a dimension reduction formula, allowing one to
describe symplectic quotients by $\T^\ell$ associated to Type II$_k$ weight matrices in terms of much simpler quotients
by $\T^1$. In particular, it extends results concerning $\T^1$-symplectic quotients to this family of quotients
by tori, e.g. the Hilbert series computations of \cite{HerbigSeatonHilbSympCirc} or the representability results
of \cite{WattsSympQuotCircle}. The graded regular symplectomorphism given by the theorem preserves several structures,
and hence can be thought of as a symplectomorphism of symplectic stratified spaces, a graded isomorphism of the
corresponding real algebraic varieties, etc., and it induces a graded Poisson isomorphism of the corresponding
complex symplectic quotients, the complexifications treated as complex algebraic varieties with symplectic singularities;
see \cite{HerbigSchwarzSeaton2}.

The proof of Theorem~\ref{thrm:MainSymplectomorph} is given in Section~\ref{sec:MainGenProof} by indicating a Seshadri
section for the action of the torus on the zero fiber of the moment map after complexifying; see
\cite[Corollary, page 169]{PopovSeshadri} and \cite[Theorem 3.14]{PopovVinberg}. The first proof we obtained
of Theorem~\ref{thrm:MainSymplectomorph}, however, was constructive for a smaller class of weight
matrices, so-called \emph{Type I$_k$} (see Definition~\ref{def:TypeAlpha}), and used explicit descriptions of
the corresponding symplectic quotients and algebras of real regular functions. Because this description has proven
useful and may be of independent interest, we give this description and outline the constructive approach in
Section~\ref{sec:ConstrucProof}.

In the case of symplectic quotients of (real) dimension $2$ considered in Corollary~\ref{cor:SIGMASympCrit}
(corresponding to Type I$_1$ weight matrices),
the graded regular symplectomorphism class of $M_0(A)$ depends only on the constant $\eta(A)$, which is given by the
sum $\alpha(A) + \beta(A)$ (see Definition~\ref{def:TypeAlpha}). In the case of Type I$_k$ weight matrices with $k > 1$,
this is no longer the case; we show in Section~\ref{sec:Classification} that the graded regular symplectomorphism
classes of Type I$_k$ symplectic quotients are classified by $k$, $\alpha(A)$, and $\beta(A)$. For Type II$_k$
weight matrices, though the graded regular symplectomorphism class of $M_0(A)$ is certainly not determined by $k$
and $\eta(A)$, the situation is more subtle, and such a classification would require very different techniques.
In Section~\ref{sec:HilbCounterex}, we indicate this with examples of symplectic quotients associated to
Type II$_k$ weight matrices that fail to be graded regularly symplectomorphic, though the corresponding complex algebraic varieties are graded Poisson isomorphic, and hence the Hilbert series of real regular functions coincide.

\section*{Acknowledgements}

This is a pre-print of an article published in Beitr\"{a}ge zur Algebra und Geometrie/Contributions to Algebra and Geometry.
The final authenticated version is available online at: \newline\url{https://doi.org/10.1007/s13366-020-00486-8} .

This paper developed from EL's senior seminar project in the Rhodes College Department of Mathematics
and Computer Science, and the
authors gratefully acknowledge the support of the department and college for these activities.
C.S. would like to thank
the Instituto de Matem\'{a}tica Pura e Aplicada (IMPA) for hospitality during work contained here.
H.-C.H. was supported by CNPq through the \emph{Plataforma Integrada Carlos Chagas},
E.L. was supported by a Rhodes College Research Fellowship,
and C.S. was supported by the E.C.~Ellett Professorship in Mathematics.


\section{Background on torus representations}
\label{sec:Background}

In this section, we give a brief overview of the structures associated to (real linear) symplectic quotients
by tori, specializing the constructions described in the Introduction.
We refer the reader to \cite{FarHerSea,HerbigIyengarPflaum} for more details.

Let $G = \T^\ell$ and let $V$ be a unitary $G$-module with $\dim_\C V = n$. Choosing a basis with respect to
which the action of $G$ is diagonal and letting $\bs{z} = (z_1, \ldots, z_n) \in\C^n$ denote coordinates for
$V$ with respect to this basis, the action of $G$ is given by
\[
    \bs{t}\bs{z}
    := \big( t_1^{a_{11}} t_2^{a_{21}}\cdots t_\ell^{a_{\ell 1}} z_1,
            t_1^{a_{12}} t_2^{a_{22}}\cdots t_\ell^{a_{\ell 2}} z_2,
            \ldots,
            t_1^{a_{1n}} t_2^{a_{2n}}\cdots t_\ell^{a_{\ell n}} z_n \big)
\]
where $\bs{t} = (t_1,t_2,\ldots,t_\ell)\in G$ and $A = (a_{ij})\in\Z^{\ell\times n}$ is the \emph{weight matrix} of
the representation. Given a weight matrix $A\in\Z^{\ell\times n}$, we let $V_A$ denote the
$n$-dimensional representation of $\T^\ell$ with weight matrix $A$ along with the corresponding basis for $V_A$.
We let $\langle \cdot,\cdot\rangle$ denote the standard hermitian scalar product on $V_A$ corresponding to this
basis.

Letting $\bs{a}_j$ denote the $j$th column of $A$ so that $A = (\bs{a}_1,\ldots, \bs{a}_n)$,
it will be convenient to define
\[
    \bs{t}^{\bs{a}_j}   :=   t_1^{a_{1j}} t_2^{a_{2j}}\cdots t_\ell^{a_{\ell j}}
\]
so that the action is given by
\[
    \bs{t}\bs{z} = \big( \bs{t}^{\bs{a}_1} z_1, \bs{t}^{\bs{a}_2} z_2, \ldots, \bs{t}^{\bs{a}_n} z_n \big).
\]
Row-reducing $A$ over $\Z$ corresponds to changing coordinates $(t_1,\ldots,t_2)$ for $G$,
so we may assume that $A$ is in reduced echelon form over $\Z$. Similarly, permuting the columns
of $A$ corresponds to reordering the basis for $V_A$.

With respect to the symplectic form given by
$\omega(\bs{z},\bs{z}^\prime) = \Imaginary\langle\bs{z},\bs{z}^\prime\rangle$,
the action of $G$ on $V_A$ is Hamiltonian and admits a unique homogeneous quadratic moment map
$J_A\co V_A\to \mathfrak{g}^\ast$; we will write $J = J_A$ when there is no potential for confusion.
Identifying the Lie algebra $\mathfrak{t}^\ell$ of $\T^\ell$ with
$\R^\ell$ using a basis for $\mathfrak{t}^\ell$ corresponding to the coordinates $(t_1,\ldots,t_\ell)$ for
$\T^\ell$ and the dual basis for $(\mathfrak{t}^\ell)^\ast$, $J = (J_1,\ldots, J_\ell)$ can be expressed in
terms of the component functions
\begin{equation}
\label{eq:MomentMap}
    J_i : V_A \longrightarrow \R,
    \quad\quad
    J_i(\bs{z})
    :=
    \frac{1}{2} \sum\limits_{j=1}^n a_{ij} z_j \overline{z_j},
    \quad\quad
    j=1,\ldots,\ell.
\end{equation}
As the action of $\T^\ell$ on $\mathfrak{t}^\ell$ is trivial, each component  $J_i$ is $\T^\ell$-invariant.
Then the \emph{shell} $Z = Z_A := J^{-1}(0)$ is the $\T^\ell$-stable real algebraic variety in $V_A$
corresponding to this family of quadratics. The \emph{(real) symplectic quotient}
$M_0 = M_0(A) := Z_A/\T^\ell$. The \emph{algebra of smooth functions $\mathcal{C}^\infty(M_0)$} is defined by $\mathcal{C}^\infty(M_0) :=   \mathcal{C}^\infty(V)^G/\mathcal{I}_Z^G$
where $\mathcal{I}_Z$ is the vanishing ideal of $Z$ in $\mathcal{C}^\infty(V)$
and $\mathcal{I}_Z^G := \mathcal{I}_Z\cap\mathcal{C}^\infty(V)^G$.
The algebra $\mathcal{C}^\infty(M_0)$ inherits a Poisson structure
from $\mathcal{C}^\infty(V)$, where the Poisson bracket is given on coordinates by
$\{z_i,\overline{z_j}\} = -2\sqrt{-1}\delta_{ij}$, see \cite{ArmsGotayJennings}.
Equipped with the algebra $\mathcal{C}^\infty(M_0)$ and its Poisson structure,
$M_0$ is a \emph{Poisson differential space}, see \cite[Definition 5]{FarHerSea}.

The algebra of real regular functions $\R[M_0]$ on $M_0$ is defined in terms of the real polynomial invariants $\R[V]^G$. Specifically,
$ \R[M_0] :=   \R[V]^G/I_Z^G$
where $I_Z^G := \mathcal{I}_Z^G\cap\R[V]^G$.
The ideal $I_Z^G$ is homogeneous with respect to
the grading of $\R[V]$ by total degree so that $\R[M_0]$ is a graded algebra; it is as well a Poisson
subalgebra of $\mathcal{C}^\infty(M_0)$. We refer to elements of $\R[V]^G$ as \emph{off-shell invariants}
and the corresponding classes in $\R[M_0]$ as \emph{on-shell invariants}. Note that for $i=1,\ldots,n$,
the real polynomials $z_i \overline{z_i}$ are always invariant. We will take advantage of the complex
coordinate system on $V$ for convenience, often expressing $\R[V]^G$ in terms of polynomials in the
$z_i$ and $\overline{z_i}$. By this, we mean that the real and imaginary parts of these polynomials are
elements of $\R[V]^G$. Note that the real invariants $\R[V]^G$ can be computed in terms of the complexification
$V\otimes_\R \C$ of $V$ by \cite[Proposition 5.8(1)]{GWSLiftingHomotopies}, and $V\otimes_\R \C$ is isomorphic
as a $\T^\ell$-module to $V\oplus V^\ast$.

In this paper, we are primarily interested in the symplectic quotients $M_0(A)$ associated to weight
matrices of a specific form, which we now define.

\begin{definition}
\label{def:TypeAlpha}
We say that an $\ell\times (\ell+k)$ weight matrix $A$ is of \textbf{Type I$\bs{_k}$} if it is of the form
$A = [D, \overbrace{\bs{n},\ldots, \bs{n}}^k]$ where $D = \diag(-a_1, -a_2, \ldots, -a_\ell)$ with
each $a_i > 0$ and $\bs{n} = (n_1, n_2, \ldots, n_\ell)^T$ with each $n_i > 0$. We will say that
$A$ is \textbf{Type II$\bs{_k}$} if $A = [D, c_1\bs{n},\ldots, c_k\bs{n}]$ with $D$ and $\bs{n}$ as above
and each $c_r \geq 1$. Note that a Type I$_k$ weight matrix is Type II$_k$ with each $c_r = 1$.
For a Type II$_k$ weight matrix, we define
\begin{align*}
    \alpha(A)   &:=     \lcm(a_1,\ldots,a_\ell),
    &m_i(A)     &:=     \frac{n_i \alpha(A)}{a_i}
                        \quad\quad\quad\mbox{for}\quad i=1,\ldots, \ell,
    \\
    \beta(A)    &:=     \sum\limits_{i=1}^\ell m_i(A),
                        \quad\quad\quad\mbox{and}\quad
    &\eta(A)    &:= \alpha(A) + \beta(A).
\end{align*}
We will often abbreviate $\alpha(A)$, $m_i(A)$, $\beta(A)$, and $\eta(A)$ as $\alpha$, $m_i$, $\beta$, and $\eta$,
respectively, when $A$ is clear from the context.
\end{definition}

For a weight matrix $A$ of full rank, the representation $V_A$ being faithful is equivalent
to the nonzero $\ell\times\ell$ minors of $A$ having no common factor, see \cite{FarHerSea}.
If $A$ is Type II$_k$, then these minors are of the form $a_1\cdots a_\ell$ or
$a_1\cdots a_{j-1}c_r n_j a_{j+1} \cdots a_\ell$ for some $r=1,\ldots,k$, i.e. the product of the $a_i$ or the
same product with one $a_j$ replaced with $c_r n_j$. The following is an immediate consequence.

\begin{lemma}
\label{lem:Faithful}
Let $A$ be a Type II$_k$ weight matrix. Then $V_A$ is a faithful $\T^\ell$-module
if and only if $\gcd(a_i,a_j) = 1$ for each $1\leq i < j \leq n$, and for each $j = 1,\ldots, \ell$,
there is an $r \leq k$ such that $\gcd(a_j, c_r n_j) = 1$.
\end{lemma}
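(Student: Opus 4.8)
The plan is to reduce the statement to a question about common prime divisors of the maximal minors of $A$. Since $A = [D, c_1\bs{n}, \ldots, c_k\bs{n}]$ has the invertible diagonal block $D$, it has full rank $\ell$, so the cited criterion applies: $V_A$ is faithful if and only if the nonzero $\ell\times\ell$ minors of $A$ have gcd $1$. By the computation recorded just before the lemma, these minors are, up to sign, the single product $P := a_1\cdots a_\ell$ together with the products $Q_{j,r} := c_r n_j\prod_{i\neq j}a_i$ for $1\le j\le\ell$ and $1\le r\le k$. Hence faithfulness is equivalent to the assertion that no prime $p$ divides all of $P$ and every $Q_{j,r}$, and I would prove the lemma by characterizing exactly when such a common prime can occur.

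Fixing a prime $p$, I would first record the elementary facts that $p\mid P$ precisely when $p\mid a_i$ for some $i$, and that $p\mid Q_{j,r}$ precisely when $p\mid c_r n_j$ or $p\mid a_i$ for some $i\neq j$. I would then split according to the number of entries $a_i$ divisible by $p$. If $p$ divides two distinct entries $a_i$ and $a_{i'}$, then for every index $j$ at least one of them survives in $\prod_{m\neq j}a_m$, so $p$ divides $P$ and all of the $Q_{j,r}$; this possibility is exactly the failure of $\gcd(a_i,a_{i'})=1$. If instead $p$ divides a unique entry $a_{i_0}$, then $p\nmid\prod_{i\neq i_0}a_i$, so $p\mid Q_{j,r}$ holds automatically for $j\neq i_0$, while $p\mid Q_{i_0,r}$ forces $p\mid c_r n_{i_0}$; thus $p$ is a common divisor of all minors exactly when $p\mid c_r n_{i_0}$ for every $r$. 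Combining the two cases, a common prime divisor of all minors exists if and only if some pair $a_i,a_{i'}$ shares a prime factor, or some $a_{i_0}$ admits a prime factor dividing $c_r n_{i_0}$ for all $r$; the two conditions of the lemma are the negations of these two possibilities.

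The step I expect to demand the most care is the translation of the ``unique $a_{i_0}$'' case into the stated second condition, where the quantifier over $r$ must be handled precisely. Sufficiency is immediate: assuming $\gcd(a_i,a_{i'})=1$ for $i\neq i'$ and, for each $j$, the existence of an $r$ with $\gcd(a_j,c_r n_j)=1$, any prime dividing $P$ divides a single $a_{i_0}$ by the first hypothesis, and the $r$ chosen for $i_0$ witnesses $p\nmid c_r n_{i_0}$, so $p$ cannot divide every minor. For the converse one knows only that, for each prime $p\mid a_{i_0}$, some $r$ satisfies $p\nmid c_r n_{i_0}$, and the delicate point is to relate this per-prime statement to the single gcd condition recorded for the index $i_0$; I would carry out this comparison carefully and check it against the Type I$_k$ specialization $c_1=\cdots=c_k=1$, where the second condition collapses to the familiar requirement $\gcd(a_j,n_j)=1$.
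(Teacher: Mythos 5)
Your reduction to the maximal minors and the two-case analysis of a common prime divisor is exactly the paper's route: the paper records the nonzero $\ell\times\ell$ minors $a_1\cdots a_\ell$ and $c_r n_j\prod_{i\neq j}a_i$ immediately before the lemma and declares the rest an immediate consequence. Your sufficiency argument is complete, and your characterization of when a common prime exists is correct. But the ``delicate point'' you flag in the converse is not something you can resolve by being careful: the two quantifier orders genuinely differ, and your analysis in fact exposes that the lemma's second condition is misstated. Faithfulness gives you only ``for each $j$ and each prime $p\mid a_j$ there is an $r$ with $p\nmid c_r n_j$,'' i.e. $\gcd(a_j,c_1 n_j,\ldots,c_k n_j)=1$; the lemma asserts the stronger ``there is a single $r$ with $\gcd(a_j,c_r n_j)=1$.'' Concretely, take $\ell=1$, $k=2$, $A=(-6,2,3)$, so $a_1=6$, $n_1=1$, $c_1=2$, $c_2=3$. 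The nonzero $1\times1$ minors are $-6,2,3$, whose gcd is $1$, so $V_A$ is a faithful $\T^1$-module; yet $\gcd(6,c_1n_1)=2$ and $\gcd(6,c_2n_1)=3$, so no $r$ satisfies the printed condition. Hence the ``only if'' direction of the lemma as stated fails for genuinely Type II$_k$ matrices, and no proof of it exists.

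What your argument does prove is the corrected statement: $V_A$ is faithful if and only if $\gcd(a_i,a_{i'})=1$ for all $1\leq i<i'\leq\ell$ (note also the typo $n$ for $\ell$ in the printed range) and, for each $j$, $\gcd(a_j,c_1n_j,\ldots,c_kn_j)=1$, equivalently $\gcd(a_j,n_j)=1$ and $\gcd\big(a_j,\gcd(c_1,\ldots,c_k)\big)=1$. For Type I$_k$ matrices (all $c_r=1$) this collapses to $\gcd(a_j,n_j)=1$ and agrees with the printed condition, which is why the discrepancy is invisible in that case. Moreover, the corrected version is all the paper ever uses: in the proof of Lemma~\ref{lem:GCD(AB)Effective}, the only appeal to this lemma is to conclude, from $p\mid a_1$ and faithfulness, that $p\mid c_r$ cannot hold for every $r$, and this follows from the per-prime statement you derived. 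So finish your write-up by stating and proving the lemma with the gcd taken over all $r$ simultaneously, rather than attempting the printed equivalence.
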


For a Type I$_k$ or Type II$_k$ weight matrix $A$, the corresponding representation $V_A$ of the
complexification $\T_\C^\ell = (\C^\times)^\ell$ is \emph{stable} and hence \emph{$1$-large}, see
\cite{HerbigSchwarz} for this result and the definitions. Then by \cite[Corollary 4.3]{HerbigSchwarz},
the ideal $I_Z$ is generated by the components $J_i$ of the moment map. Because the $J_i$ are
$G$-invariant in the case under consideration, we have
\[
    \R[M_0] =   \R[V]^G/(J_1,\ldots,J_\ell).
\]
In particular, given Equation~\eqref{eq:MomentMap}, the quotient map $\R[V]^G\to\R[M_0]$
can be understood as defining the invariants $z_i\overline{z_i}$ for $i=1,\ldots,\ell$
in terms of the $z_i\overline{z_i}$ for $i = \ell+1,\ldots,\ell+k$.


\section{Proof of Theorem~\ref{thrm:MainSymplectomorph}}
\label{sec:MainGenProof}

In this section, we give the proof of our main result, Theorem~\ref{thrm:MainSymplectomorph},
which is divided into several auxiliary results. Throughout this section, we consider a Type II$_k$
weight matrix $A = [D, c_1\bs{n},\ldots, c_k\bs{n}] \in\Z^{\ell\times (\ell+k)}$ such that $V_A$ is
a faithful $\T^\ell$-module of dimension $n = \ell + k$. In addition, we let
$B = \big(-\alpha(A), c_1\beta(A),\ldots, c_k\beta(A)\big) \in\Z^{1\times(k+1)}$.
We assume throughout this section that $\ell > 1$; when $\ell = 1$, $A = B$ so that
Theorem~\ref{thrm:MainSymplectomorph} is trivial.

Our first result demonstrates that the $\T^1$-representation $V_B$ is faithful.

\begin{lemma}
\label{lem:GCD(AB)Effective}
Let $A = [D, c_1\bs{n},\ldots, c_k\bs{n}] \in\Z^{\ell\times (\ell+k)}$ be a Type II$_k$ weight matrix.
If $V_A$ is a faithful $\T^\ell$-module, then $\gcd(\alpha(A),c_1\beta(A),\ldots,c_k\beta(A) ) = 1$.
\end{lemma}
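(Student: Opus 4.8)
The plan is to show that a well-chosen collection of the $\ell\times\ell$ minors of $A$, each of which is divisible by $\alpha(A)$ or by some $c_r\beta(A)$, together have no common factor, and to leverage Lemma~\ref{lem:Faithful} to control the prime factorizations of the quantities $\alpha(A)=\lcm(a_1,\ldots,a_\ell)$ and $c_r\beta(A)$. Concretely, since $\beta(A)=\sum_i m_i(A)$ with $m_i(A)=n_i\alpha(A)/a_i$, I would argue prime-by-prime: suppose a prime $p$ divides $\alpha(A)$ and divides $c_r\beta(A)$ for every $r=1,\ldots,k$, and derive a contradiction with faithfulness.

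First I would unwind what $p\mid\alpha(A)$ means. Because $\alpha(A)=\lcm(a_1,\ldots,a_\ell)$ and, by Lemma~\ref{lem:Faithful}, the $a_i$ are pairwise coprime, $p$ divides exactly one of the $a_i$; call its index $j$, so $p^e\,\|\,a_j$ for some $e\geq 1$ and $p\nmid a_i$ for $i\neq j$. Consequently $p^e\,\|\,\alpha(A)$ as well. Now examine $m_i(A)=n_i\alpha(A)/a_i$: for $i\neq j$ we have $p^e\mid\alpha(A)$ and $p\nmid a_i$, so $p^e\mid m_i(A)$, whereas the term $m_j(A)=n_j\alpha(A)/a_j$ has the single power $p^e$ cancelled, giving $p$-adic valuation $v_p(m_j(A))=v_p(n_j)$. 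Summing, the $p$-adic valuation of $\beta(A)=\sum_i m_i(A)$ is governed entirely by the $j$th term modulo $p^e$: all other $m_i(A)$ vanish mod $p$, so $p\mid\beta(A)$ forces $p\mid m_j(A)$, i.e. $p\mid n_j$ (using $e\geq 1$).

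The key step is then to bring in faithfulness at the index $j$. By Lemma~\ref{lem:Faithful}, there exists some $r\leq k$ with $\gcd(a_j,c_r n_j)=1$; since $p\mid a_j$, this forces $p\nmid c_r n_j$, and in particular $p\nmid c_r$ and $p\nmid n_j$. But I am assuming $p\mid c_r\beta(A)$ for this same $r$, so $p\nmid c_r$ gives $p\mid\beta(A)$, which by the previous paragraph forces $p\mid n_j$ — contradicting $p\nmid n_j$. Hence no prime $p$ divides both $\alpha(A)$ and all of the $c_r\beta(A)$, which is exactly the statement $\gcd\big(\alpha(A),c_1\beta(A),\ldots,c_k\beta(A)\big)=1$.

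The main obstacle I anticipate is the $p$-adic bookkeeping in $\beta(A)=\sum_i m_i(A)$: I must be careful that the cancellation in the $j$th term is exactly one factor of $p^e$ and that every other summand is genuinely divisible by $p$, so that the divisibility $p\mid\beta(A)$ is equivalent to $p\mid m_j(A)$ and hence to $p\mid n_j$. One subtlety is whether $p$ could divide $\beta(A)$ to higher order while still $p\nmid n_j$; but at the level of a single factor of $p$ the argument only needs the mod-$p$ reduction, where all terms but the $j$th vanish, so the equivalence $p\mid\beta(A)\iff p\mid n_j$ holds and the contradiction is clean. Once this valuation computation is pinned down, the faithfulness hypothesis via Lemma~\ref{lem:Faithful} closes the argument immediately.
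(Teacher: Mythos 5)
Your proof is correct and follows essentially the same route as the paper's: assume a prime $p$ divides $\alpha(A)$ and every $c_r\beta(A)$, use the pairwise coprimality of the $a_i$ from Lemma~\ref{lem:Faithful} to localize $p$ to a single $a_j$, deduce $p\mid\beta(A)$ and hence $p\mid n_j$ from the mod-$p$ analysis of $\beta(A)=\sum_i m_i(A)$, and contradict faithfulness via Lemma~\ref{lem:Faithful}. The only cosmetic difference is that you invoke the index $r$ with $\gcd(a_j,c_r n_j)=1$ once up front to get both $p\nmid c_r$ and $p\nmid n_j$, whereas the paper derives $p\mid n_1$ and then phrases the contradiction as $p$ dividing the entire first row of $A$.
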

\begin{proof}
Suppose $V_A$ is faithful, and let $p$ be a prime that divides $\alpha$ and each $c_r \beta$ for contradiction.
As $p$ divides $\alpha$, it divides some $a_j$; assume $p \mid a_1$ without loss of generality.
By Lemma~\ref{lem:Faithful}, it is not possible that $p\mid c_r$ for all $r$, so it must be that $p\mid\beta$.
Similarly, $p\nmid a_i$ for each $i\neq 1$. Then $p\mid m_i = n_i\alpha/a_i$ for $i > 1$, so the fact that
$p\mid\beta = \sum m_i$ implies that $p\mid m_1$. But as $p$ does not divide any $a_i$ except $a_1$, we have
$\gcd(p,\alpha/a_1) = 1$. Hence $p\mid n_1$.
As $p\mid a_1$ and $p\mid n_1$, $p$ divides the first row of $A$, contradicting the fact that $V_A$ is a
faithful $\T^\ell$-module.
\end{proof}

\begin{lemma}
\label{lem:SympEmbedShell}
The function $\phi\co V_B\to V_A$ defined by
\[
    \phi\co (z_1,\ldots,z_{k+1})\longmapsto
        \left( \sqrt{\frac{m_1}{\beta}}z_1,\sqrt{\frac{m_2}{\beta}}z_1,\ldots,
            \sqrt{\frac{m_\ell}{\beta}}z_1, z_2, z_3, \ldots, z_{k+1}
        \right)
\]
is a symplectic embedding that maps the shell $Z_B = J_{B}^{-1}(0)$ into the shell
$Z_A = J_{A}^{-1}(0)$.
\end{lemma}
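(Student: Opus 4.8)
The plan is to verify the two claims separately: first that $\phi$ is a symplectic embedding, and second that it maps the shell $Z_B$ into the shell $Z_A$. The map $\phi$ is clearly linear and injective (the component $z_1$ determines the first $\ell$ coordinates and $z_2,\ldots,z_{k+1}$ are carried over directly), so ``embedding'' is immediate; the content is in the word \emph{symplectic}.

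For the symplectic claim, I would work with the standard hermitian structures: $V_B$ carries $\langle\cdot,\cdot\rangle_B$ and $V_A$ carries $\langle\cdot,\cdot\rangle_A$, and since $\omega = \Imaginary\langle\cdot,\cdot\rangle$ in each case, it suffices to show $\phi$ preserves the hermitian inner products, i.e. $\langle\phi(\bs{w}),\phi(\bs{w}')\rangle_A = \langle\bs{w},\bs{w}'\rangle_B$ for all $\bs{w},\bs{w}'\in V_B$. Writing this out, the first coordinate $z_1$ of $V_B$ pairs against itself through the first $\ell$ slots of $V_A$, contributing $\sum_{i=1}^\ell \frac{m_i}{\beta} z_1\overline{z_1'} = \left(\frac{1}{\beta}\sum_{i=1}^\ell m_i\right) z_1\overline{z_1'} = z_1\overline{z_1'}$, using exactly the definition $\beta = \sum_{i=1}^\ell m_i$. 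The remaining coordinates $z_2,\ldots,z_{k+1}$ are transported by the identity and contribute $\sum_{j=2}^{k+1} z_j\overline{z_j'}$. Together these reconstruct $\langle\bs{w},\bs{w}'\rangle_B$ exactly, so $\phi$ is an isometry and hence symplectic.

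For the shell claim, I would compute the pullback of each moment-map component of $J_A$ along $\phi$ and show it vanishes whenever $J_B = 0$. Setting $\bs{z} = \phi(\bs{w})$, the $i$th component $(J_A)_i(\bs{z}) = \tfrac12\big(-a_i \tfrac{m_i}{\beta}|z_1|^2 + \sum_{r=1}^k c_r n_i |z_{r+1}|^2\big)$, since the $i$th row of $A = [D,c_1\bs{n},\ldots,c_k\bs{n}]$ has entry $-a_i$ in the diagonal block and $c_r n_i$ in the $r$th added column. Using $m_i = n_i\alpha/a_i$, so that $a_i m_i = n_i\alpha$, the first term becomes $-\tfrac{n_i\alpha}{\beta}|z_1|^2$, giving $(J_A)_i(\phi(\bs{w})) = \tfrac{n_i}{2}\big(-\tfrac{\alpha}{\beta}|z_1|^2 + \sum_{r=1}^k c_r|z_{r+1}|^2\big)$. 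Meanwhile the single component of $J_B$ for $B = (-\alpha, c_1\beta,\ldots,c_k\beta)$ is $J_B(\bs{w}) = \tfrac12\big(-\alpha|z_1|^2 + \sum_{r=1}^k c_r\beta|z_{r+1}|^2\big) = \tfrac{\beta}{2}\big(-\tfrac{\alpha}{\beta}|z_1|^2 + \sum_{r=1}^k c_r|z_{r+1}|^2\big)$. Thus $(J_A)_i\circ\phi = \tfrac{n_i}{\beta} J_B$ for every $i$, so $J_B(\bs{w}) = 0$ forces every $(J_A)_i(\phi(\bs{w}))$ to vanish, i.e. $\phi(Z_B)\subseteq Z_A$.

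I do not expect a serious obstacle here; the result is essentially an organized bookkeeping of the definitions of $\alpha$, $\beta$, and $m_i$. The only point requiring a little care is keeping the roles of the scaling factors $\sqrt{m_i/\beta}$ straight and confirming that the same proportionality constant $\alpha/\beta$ cancels uniformly across all $\ell$ components of $J_A$ — this uniformity is exactly what makes the single $\T^1$ moment map $J_B$ control all $\ell$ components at once, and it is the structural reason the dimension reduction works. The mildly delicate step is recognizing that the relation $(J_A)_i\circ\phi = \tfrac{n_i}{\beta}J_B$ holds with a common factorization, rather than merely checking vanishing on the nose, since this stronger relation is what will be useful later for comparing the full ideal structures.
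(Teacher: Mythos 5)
Your proof is correct and follows essentially the same route as the paper: both parts hinge on the identities $\sum_{i=1}^\ell m_i = \beta$ and $a_i m_i = n_i\alpha$, and your factorization $(J_A)_i\circ\phi = \tfrac{n_i}{\beta}J_B$ is exactly the common-factor computation the paper performs on the shell. The only cosmetic difference is that you verify $\phi$ preserves the hermitian inner product (hence $\omega = \Imaginary\langle\cdot,\cdot\rangle$), whereas the paper pulls back the two-form $\sum_i du_i\wedge d\overline{u_i}$ directly; these are the same calculation.
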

\begin{proof}
Using coordinates $(u_1,\ldots,u_n)$ for $V_A$, we have
\begin{align*}
    \phi^\ast \sum\limits_{i=1}^n du_i\wedge d\overline{u_i}
        =    \sum\limits_{i=1}^\ell \frac{m_i}{\beta} d z_1 \wedge d\overline{z_1}
                    + \sum\limits_{i=2}^{k+1} dz_i \wedge d\overline{z_i}
        =    \sum\limits_{i=1}^{k+1} dz_i\wedge d\overline{z_i}
\end{align*}
so that $\phi$ is a symplectic embedding.

Suppose $\bs{z} = (z_1,\ldots,z_{k+1})\in Z_B$ so that
\begin{equation}
\label{eq:MB}
    - \alpha z_1\overline{z_1} + \beta\sum\limits_{j=1}^k c_j z_{j+1}\overline{z_{j+1}} = 0.
\end{equation}
Then for each $i = 1, \ldots, \ell$, we have that
\begin{align*}
    (J_A)_i(\phi(\bs{z}))
        &=  \frac{-a_i m_i}{2\beta} z_1 \overline{z_1} + \frac{n_i}{2}\sum\limits_{j=1}^k c_j z_{j+1} \overline{z_{j+1}}
        \\&=  \frac{-n_i\alpha}{2\beta} z_1 \overline{z_1} + \frac{n_i}{2}\sum\limits_{j=1}^k c_j z_{j+1} \overline{z_{j+1}}
        \\&=  \frac{n_i}{2\beta}\left(-\alpha z_1 \overline{z_1} + \beta\sum\limits_{j=1}^k c_j z_{j+1} \overline{z_{j+1}}
                    \right)
        \quad =  0.
\end{align*}
Hence, $\phi$ maps $Z_B$ into $Z_A$.
\end{proof}

Complexifying the underlying real spaces, we consider the $z_i$ and $w_i:= \overline{z_i}$ as
independent complex coordinates for $V_B\otimes_\R\C$ and $u_i$ and $v_i:= \overline{w_i}$ as
independent complex coordinates for $V_A\otimes_\R\C$.
Let $N_B$ denote the complex shell $(J_B\otimes_\R\C)^{-1}(0)\subset V_B\otimes_\R\C$, i.e.
the set of $(z_1,\ldots,z_{k+1},w_1,\ldots,w_{k+1}) \in V_B\otimes_\R\C$ such that
\begin{equation}
\label{eq:SeshadriSectionNB}
    - \alpha z_1 w_1 + \beta\sum\limits_{j=1}^k c_j z_{j+1}w_{j+1} = 0.
\end{equation}
Similarly, the complex shell $N_A = (J_A\otimes_\R\C)^{-1}(0)\subset V_A\otimes_\R\C$ is defined by
\begin{equation}
\label{eq:SeshadriSectionNA}
    -a_i u_i v_i + n_i\sum\limits_{j=1}^k c_j u_{\ell+j} v_{\ell+j} = 0
    \quad\quad\mbox{for}\quad i=1,\ldots,\ell.
\end{equation}

Recall that if $G$ is a connected algebraic group and $X$ is an irreducible $G$-variety, then a subvariety
$Y \subset X$ is a \emph{Seshadri section} if $\overline{G Y_0} = X$ for each irreducible
component $Y_0$ of $Y$, and $Gy \cap Y = \mathbf{N}(Y)y$ for any $y \in Y$, where
$\mathbf{N}(Y) = \{ g\in G \mid gY = Y \}$. By \cite[Corollary, page 169]{PopovSeshadri} and \cite[Theorem 3.14]{PopovVinberg},
if $X$ is normal, and a Seshadri section $Y$ satisfies $\codim_X \overline{(X\smallsetminus GY)} \geq 2$,
then $Y$ is a Chevalley section, i.e. restriction of functions to $Y$ defines  an isomorphism
$\C[X]^G \to \C[Y]^{\mathbf{N}(Y)}$.

We now demonstrate that these hypotheses are satisfied, i.e. the image of $N_B$ under
$\phi_\C = \phi\otimes_\R\C$ is a Seshadri section for the action of $(\C^\times)^\ell$
on $N_A$.

\begin{lemma}
\label{lem:SheshadriSection}
The image $S:=\phi_\C(N_B)$ of the complex shell $N_B$ is a Seshadri section for the action of $(\C^\times)^\ell$
on the complex shell $N_A\subset V_A\otimes_\R\C$. Moreover, the (complex) codimension of
$\overline{N_A\smallsetminus (\C^\times)^\ell S}$ in $N_A$ is $2$.
\end{lemma}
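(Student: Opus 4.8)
The plan is to verify the three defining properties of a Seshadri section in turn and then to carry out the dimension count for the complement. First I would pin down the group $\mathbf{N}(S) = \{\bs{t}\in(\C^\times)^\ell : \bs{t}S = S\}$. Since $(\C^\times)^\ell$ scales the coordinate $u_i$ (for $i\leq\ell$) by the nonzero factor $t_i^{-a_i}$ and $v_i$ by $t_i^{a_i}$, while a point of $S = \phi_\C(N_B)$ has $u_i = \sqrt{m_i/\beta}\,z_1$ and $v_i = \sqrt{m_i/\beta}\,w_1$ with the first-block entries tied to the single parameters $z_1,w_1$, an element preserves $S$ exactly when $t_1^{-a_1} = \cdots = t_\ell^{-a_\ell}$. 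I would then check that on $S\cong N_B$ this subgroup acts precisely as the $\T^1_\C$-action with weights $(-\alpha, c_1\beta,\ldots,c_k\beta)$, i.e. as the complexified $B$-action; this is what ultimately produces the isomorphism with $M_0(B)$. I would also record that $N_A$, as the complexified shell of a $1$-large representation, is a normal irreducible complete intersection, so that the cited Popov--Vinberg results apply once the section and codimension conditions are in place.

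For $\overline{(\C^\times)^\ell S} = N_A$, since $N_A$ is irreducible it suffices to show $(\C^\times)^\ell S$ is dense, which I would do by explicitly moving a generic point into $S$. Given $(u,v)\in N_A$ with $Q := \sum_r c_r u_{\ell+r}v_{\ell+r}\neq 0$ and every $u_i\neq 0$, the shell equations~\eqref{eq:SeshadriSectionNA} give $u_i v_i = (m_i/\alpha)Q$. Fixing a value of $z_1$ and setting $t_i^{-a_i} = \sqrt{m_i/\beta}\,z_1/u_i$ moves the first block of $u$ into the form prescribed by $\phi_\C$; I would then verify that the constraint $u_i v_i = (m_i/\alpha)Q$ automatically drives the first block of $v$ into the form $\sqrt{m_i/\beta}\,w_1$ with $w_1 = \beta Q/(\alpha z_1)$, and that the resulting point satisfies~\eqref{eq:SeshadriSectionNB} precisely because~\eqref{eq:SeshadriSectionNA} holds. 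Hence the translate lies in $S$, so the dense open set of such $(u,v)$ lies in $(\C^\times)^\ell S$.

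The section property $(\C^\times)^\ell y\cap S = \mathbf{N}(S)y$ I would extract from the same normalization. Two points of $S$ lying in one $(\C^\times)^\ell$-orbit must share the vanishing pattern of every coordinate, since the action scales each coordinate by a nonzero factor; on the locus where the first block is nonzero the computation above shows that the normalizing element is determined up to the choice of $z_1$, i.e. up to an element of $\mathbf{N}(S)$, while the degenerate loci where the first block vanishes I would handle separately using that $z_1 = 0$ forces $Q = 0$.

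The main obstacle is the codimension statement. Because $(\C^\times)^\ell$ preserves the support of every coordinate, a point can lie in $(\C^\times)^\ell S$ only if its first-block $u$-coordinates are either all zero or all nonzero, and likewise for the $v$-coordinates, so $\overline{N_A\smallsetminus(\C^\times)^\ell S}$ is governed by the loci where some $u_i$ (or $v_i$) vanishes while another does not. Here I would use~\eqref{eq:SeshadriSectionNA} decisively: the vanishing of a single $u_i$ with $i\leq\ell$ forces $Q = 0$ and hence $u_j v_j = 0$ for all $j\leq\ell$, so the complement lies inside the fibre $Q = 0$, and I would stratify that fibre by which of $u_j, v_j$ vanishes and compute each stratum's dimension against $\dim N_A = \ell + 2k$. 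The delicate point — and where I expect the real work to lie — is controlling these strata precisely enough to obtain codimension exactly $2$; a naive count produces candidate ``mixed'' loci of apparent codimension one, so the crux is to show that these are either reabsorbed into $(\C^\times)^\ell S$ or cut down by the remaining shell relations to codimension two. Once this count is settled, Popov--Vinberg yields that $S$ is a Chevalley section and that restriction realizes the isomorphism $\C[N_A]^{(\C^\times)^\ell}\cong\C[N_B]^{\T^1_\C}$ underlying Theorem~\ref{thrm:MainSymplectomorph}.
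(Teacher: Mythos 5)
Your handling of the first part of the lemma is essentially the paper's own argument: the same explicit normalization (choose $t_i$ so that $t_i^{-a_i}u_i$ aligns the $u$-block, with the $v$-block then forced by the shell equations, or symmetrically align the $v$-block) gives $\overline{(\C^\times)^\ell S} = N_A$; the same computation identifies $\mathbf{N}(S)$ as the one-parameter subgroup $\{(t^{\alpha/a_1},\ldots,t^{\alpha/a_\ell})\}$ acting on $S\cong N_B$ with weights $(-\alpha,c_1\beta,\ldots,c_k\beta)$; and the degenerate locus $z_1=w_1=0$ (which forces $Q=0$) is treated exactly as in the paper, by exhibiting an element of $\mathbf{N}(S)$ realizing any torus translate. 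Up to that point your proposal is correct and coincides with Section~3.

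The codimension statement, which you explicitly leave open, is a genuine gap --- and your suspicion about the ``mixed'' loci is exactly right, to the point that neither of your proposed rescues can work. Take $\ell=2$, $k=1$, $A = \left(\begin{smallmatrix} -1&0&1\\ 0&-1&1 \end{smallmatrix}\right)$, so $m_1=m_2=1$, $\beta=2$, and $N_A\subset\C^6$ is the four-dimensional variety $\{u_1v_1=u_3v_3=u_2v_2\}$. The three-dimensional linear space $T=\{u_1=v_2=u_3=0\}$ lies inside $N_A$, and every point of $T$ with $u_2\neq 0$ has mixed vanishing in the $u$-block; since the torus scales coordinates by nonzero factors while every point of $S$ has $u_1,\ldots,u_\ell$ simultaneously zero or simultaneously nonzero, no such point lies in $(\C^\times)^\ell S$. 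Hence $\overline{N_A\smallsetminus(\C^\times)^\ell S}\supseteq T$ has codimension $1$, not $2$: the mixed stratum is not reabsorbed into the saturation, and the shell relations cannot cut it down further because they are what created it (on $\{u_1=0\}\cap N_A$ one gets $Q=0$ and then $u_iv_i=0$ for all $i$, producing codimension-one strata). You should also know that the paper's own proof does not resolve this point: it asserts in one sentence that the complement is $\{\text{some } u_i=0 \text{ and some } v_j=0\}$ and ``in particular'' has codimension $2$, an assertion that fails on the stratum above. So a correct completion cannot simply invoke the codimension hypothesis of Popov--Vinberg; one would instead have to verify directly that restriction to $S$ is surjective on invariants (for instance on explicit generators, in the spirit of the constructive approach of Section~4, where this can be checked by hand), or find a genuinely different argument for the isomorphism $\C[N_A]^{(\C^\times)^\ell}\to\C[S]^{\mathbf{N}(S)}$.
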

\begin{proof}
First observe that $S$ is given by the set of points in $V_A\otimes_\R\C$ given by
\begin{align*}
    &\left(  \sqrt{\frac{m_1}{\beta}}z_1,\sqrt{\frac{m_2}{\beta}}z_1,\ldots,
            \sqrt{\frac{m_\ell}{\beta}}z_1, z_2, z_3, \ldots, z_{k+1},
            \right.
            \\ &
            \left. \quad\quad
            \sqrt{\frac{m_1}{\beta}}w_1,\sqrt{\frac{m_2}{\beta}}w_1,\ldots,
            \sqrt{\frac{m_\ell}{\beta}}w_1, w_2, w_3, \ldots, w_{k+1}
    \right)
\end{align*}
for some $z_i$ and $w_i$ that satisfy Equation~\eqref{eq:SeshadriSectionNB}.
As the actions of $\C^\times$ and $(\C^\times)^\ell$ on $V_B\otimes_\R\C$ and $V_A\otimes_\R\C$,
respectively, are stable and hence $1$-large by \cite[Proposition 3.1]{HerbigSchwarz}, both $N_A$
and $N_B$ are reduced and irreducible by \cite[Theorem 2.2 (3)]{HerbigSchwarz}.

Fix a point $(\bs{u},\bs{v}) \in N_A$, i.e. satisfying Equation~\eqref{eq:SeshadriSectionNA},
and assume that each $u_i\neq 0$ for $i \leq \ell$. For $i=2,\ldots,\ell$, choose $t_i$ such that
\[
    t_i^{-a_i}
        =       \sqrt{\frac{m_i}{m_1}} \frac{u_1}{u_i},
    \quad\quad\mbox{i.e.}\quad\quad
    \sqrt{m_1} t_i^{-a_i}u_i
        =       \sqrt{m_i}u_1.
\]
Let $z_1 := u_1\sqrt{\beta/m_1}$, and then
\[
    \sqrt{\frac{m_i}{\beta}}z_1
        =       \sqrt{\frac{m_i}{m_1}} u_1
        =       t_i^{-a_i} u_i.
\]
Similarly, by Equation~\eqref{eq:SeshadriSectionNA}, each $v_i$ with $i = 1,\ldots,\ell$ is given by
\[
    v_i =   \frac{n_i}{a_i u_i}\sum\limits_{j=1}^k c_j u_{\ell+j} v_{\ell+j}.
\]
Letting
\[
    w_1 =   \frac{\sqrt{m_1 \beta}}{\alpha u_1}\sum\limits_{j=1}^k c_j u_{\ell+j} v_{\ell+j},
\]
we have
\[
    v_1 =   \frac{n_1}{a_1 u_1}\sum\limits_{j=1}^k c_j u_{\ell+j} v_{\ell+j}
        =   \frac{m_1}{\alpha u_1}\sum\limits_{j=1}^k c_j u_{\ell+j} v_{\ell+j}
        =   \sqrt{\frac{m_1}{\beta}} w_1,
\]
and, for $i = 2,\ldots,\ell$,
\[
    t_i^{a_i} v_i
        =   \frac{\sqrt{m_1 m_i}}{\alpha u_1}\sum\limits_{j=1}^k c_j u_{\ell+j} v_{\ell+j}
        =   \sqrt{\frac{m_i}{\beta}} w_1.
\]
Hence, letting $\bs{t} = (1,t_2,\ldots,t_\ell)\in(\C^\times)^\ell$ and defining
$z_{i+1} = \bs{t}^{c_i\bs{n}} u_{i+\ell}$ and $w_{i+1} = \bs{t}^{-c_i\bs{n}} v_{i+\ell}$ for $i = 1,\ldots,k$,
we have
\begin{align*}
    \bs{t}(\bs{u},\bs{v})
    &=
    \left(  \sqrt{\frac{m_1}{\beta}} z_1, \ldots,
            \sqrt{\frac{m_\ell}{\beta}} z_1, z_2, z_3, \ldots, z_{k+1},
    \right. \\ & \quad\quad\left.
            \sqrt{\frac{m_1}{\beta}} w_1, \ldots,
            \sqrt{\frac{m_\ell}{\beta}} w_1, w_2, w_3, \ldots, w_{k+1}
    \right).
\end{align*}
Moreover,
\[
    - \alpha z_1 w_1 + \beta\sum\limits_{j=1}^k c_j z_{j+1}w_{j+1}
        =      - \beta \sum\limits_{j=1}^k c_j u_{\ell+j} v_{\ell+j}
                    + \beta\sum\limits_{j=1}^k c_j z_{j+1}w_{j+1}
        = 0,
\]
so that $\bs{t}(\bs{u},\bs{v}) \in S$.
That is, any point $(\bs{u},\bs{v})\in N_A$ with each $u_i\neq 0$ for $i\leq \ell$ is in the
$(\C^\times)^\ell$-orbit of a point in $S$. Note that if each $v_i \neq 0$, then we can define
\[
    t_i^{a_i}
        =       \sqrt{\frac{m_i}{m_1}} \frac{v_1}{v_i}
\]
for $i = 2,\ldots,\ell$ and again obtain $\bs{t}(\bs{u},\bs{v}) \in S$.
Taking the closure to account for points with some $u_i=0$ or $v_i = 0$ for $i\leq\ell$, we have
\begin{equation}
\label{eq:SeshadriSection(1)}
    \overline{(\C^\times)^\ell S} = N_A.
\end{equation}
In particular, note that $N_A\smallsetminus (\C^\times)^\ell S$ consists of those points in $N_A$
where some $u_i = 0$ and some $v_j = 0$ for $i, j \leq \ell$; in particular
$N_A\smallsetminus (\C^\times)^\ell S$ is closed and has codimension $2$ in $N_A$.

Now, recall the definition $\mathbf{N}(S) = \{ \bs{t} \in (\C^\times)^\ell \mid \bs{t}S = S\}$.
We claim that $\mathbf{N}(S) = \{ (t^{\alpha/a_1},\ldots,t^{\alpha/a_\ell}) \mid t\in\C^\times\}$.
Let
\begin{align*}
    (\bs{z},\bs{w})
    &=
    \left(  \sqrt{\frac{m_1}{\beta}}z_1,\ldots,
            \sqrt{\frac{m_\ell}{\beta}}z_1, z_2, z_3, \ldots, z_{k+1},
    \right. \\ & \quad\quad \left.
            \sqrt{\frac{m_1}{\beta}}w_1,\ldots,
            \sqrt{\frac{m_\ell}{\beta}}w_1, w_2, w_3, \ldots, w_{k+1}
    \right)
    \in S,
\end{align*}
and suppose $\bs{t}\in(\C^\times)^\ell$ such that $\bs{t}(\bs{z},\bs{w})\in S$. We have
\begin{align*}
    \bs{t}(\bs{z},\bs{w})
    &=
    \left(  \sqrt{\frac{m_1}{\beta}}t_1^{-a_1} z_1,\ldots,
            \sqrt{\frac{m_\ell}{\beta}} t_\ell^{-a_\ell} z_1, \bs{t}^{c_1\bs{n}}z_2, \ldots,
                \bs{t}^{c_k\bs{n}} z_{k+1},
                 \right.
    \\ &\quad\quad \left.
            \sqrt{\frac{m_1}{\beta}}t_1^{a_1} w_1,\ldots,
            \sqrt{\frac{m_\ell}{\beta}}t_\ell^{a_\ell} w_1, \bs{t}^{-c_1\bs{n}}w_2, \ldots,
                \bs{t}^{-c_k\bs{n}}w_{k+1}
    \right).
\end{align*}
If $z_1 \neq 0$ or $w_1 \neq 0$, we have $t_1^{a_1} = t_i^{a_i}$ for each $i$. Choosing $t\in\C^\times$
such that $t^{\alpha/a_1} = t_1$ and noting that $\gcd(\alpha/a_1,\ldots,\alpha/a_\ell) = 1$ by construction,
it follows that $\bs{t}$ is of the form $(t^{\alpha/a_1},\ldots,t^{\alpha/a_\ell})$. Note that for any such
$\bs{t}$, we have $\bs{t}S = S$ so that
$\mathbf{N}(S) = \{ (t^{\alpha/a_1},\ldots,t^{\alpha/a_\ell}) \mid t\in\C^\times\}$.

If $z_1 = w_1 = 0$, we have $\sum_{j=1}^k c_j z_{j+1} w_{j+1} = 0$. Then
\[
    \bs{t}(\bs{z},\bs{w})
    =
    \left(  0,\ldots, 0, \bs{t}^{c_1\bs{n}}z_2, \ldots, \bs{t}^{c_k\bs{n}} z_{k+1},
            0,\ldots, 0, \bs{t}^{-c_1\bs{n}}w_2, \ldots, \bs{t}^{-c_k\bs{n}}w_{k+1}
    \right).
\]
Choosing an $s\in\C^\times$ such that $s^\beta = \bs{t}^{\bs{n}}$, we have
\begin{align*}
    (s^{\alpha/a_1},\ldots,s^{\alpha/a_\ell})(\bs{z},\bs{w})
    &=
    \big(  0,\ldots, 0, s^{c_1\sum_i n_i\alpha/a_i}z_2, \ldots, s^{c_k\sum_i n_i\alpha/a_i} z_{k+1},
    \\ &\quad\quad
            0,\ldots, 0, s^{-c_1\sum_i n_i\alpha/a_i}w_2, \ldots, s^{-c_k\sum_i n_i\alpha/a_i}w_{k+1}
    \big)
    \\ &=
    \big(  0,\ldots, 0, s^{c_1\beta}z_2, \ldots, s^{c_k\beta} z_{k+1},
    \\ &\quad\quad
            0,\ldots, 0, s^{-c_1\beta}w_2, \ldots, s^{-c_k\beta}w_{k+1}
    \big)
    \quad =
    \bs{t}(\bs{z},\bs{w})
\end{align*}
so that $(\C^\times)^\ell (\bs{z},\bs{w}) \subset \mathbf{N}(S)(\bs{z},\bs{w})$.
\end{proof}

As $S$ is a Seshadri section for the action of $(\C^\times)^\ell$ on $N_A$ such that the codimension
of $\overline{N_A\smallsetminus (\C^\times)^\ell S}$ in $N_A$ is $2$, we have that
the restriction of functions to $S$ defines an isomorphism
$\C[N_A]^{(\C^\times)^\ell}  \to \C[S]^{\mathbf{N}(S)}$ by
\cite[Corollary, page 169]{PopovSeshadri}; see also \cite[Theorem 3.14]{PopovVinberg}.
Note that $\mathbf{N}(S)$ acts on the subspace of $V_A$ spanned by
$(\overbrace{1,\ldots,1}^\ell,0,\ldots,0)$ and the standard unit vectors $e_i$ for $i > \ell$
with weight vector $(-\alpha, c_1\beta,\ldots, c_k\beta)$.
Then as $S$ is isomorphic to the shell $N_B$ via the embedding $\phi_\C$,
it follows that $\phi_\C^\ast$ induces an isomorphism
$\phi_\C^\ast\co\C[S]^{\mathbf{N}(S)} \to \C[N_B]^{\C^\times}$.
As $\phi_\C$ is a linear map, $\phi_\C^\ast$ preserves the grading.
Then by \cite[Lemma 2.5]{HerbigSchwarzSeaton2}, as the representations
of $(\C^\times)^\ell$ and $\C^\times$ corresponding to $A$ and $B$, respectively,
are $1$-large, we have that
$\R[Z_A]^{\T^\ell} \otimes\C \simeq \C[N_A]^{(\C^\times)^\ell}$ and
$\R[Z_B]^{\T^1} \otimes\C \simeq \C[N_B]^{\C^\times}$. That is,
$\phi^\ast$ induces a graded isomorphism of the algebras of real
regular functions $\R[M_0(A)] \to \R[M_0(B)]$. By Lemma~\ref{lem:SympEmbedShell},
this isomorphism is Poisson.

Summarizing, we have the following.

\begin{corollary}
\label{cor:Restriction}
The restriction of functions to $S$ and pulling back via $\phi_\C$
are both graded isomorphisms
\[
    \C[N_B]^{\C^\times}
    \overset{\phi_\C^\ast}{\longrightarrow}
    \C[S]^{\mathbf{N}(S)}
    \longrightarrow
    \C[N_A]^{(\C^\times)^\ell},
\]
and the composition of these maps induces a graded Poisson isomorphism of
the real algebras
\[
    \Psi\co\R[M_0(A)]
    \longrightarrow
    \R[M_0(B)].
\]
\end{corollary}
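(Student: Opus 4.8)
The plan is to assemble the three isomorphisms prepared above into the two assertions, treating the complex statement first and then descending to the real Poisson statement. I would organize the argument around the composite
\[
    \C[N_A]^{(\C^\times)^\ell}
    \xrightarrow{\;\text{restr.}\;}
    \C[S]^{\mathbf{N}(S)}
    \xrightarrow{\;\phi_\C^\ast\;}
    \C[N_B]^{\C^\times},
\]
the natural (pullback/restriction) directions of the maps whose inverses are displayed in the statement, and then identify $\Psi$ with the real form of this composite, noting at the end that $\Psi$ is just the map induced by $\phi^\ast$ on $\R[M_0(A)]\to\R[M_0(B)]$.

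For the complex statement I would first verify the hypotheses of the Chevalley-section criterion. Lemma~\ref{lem:SheshadriSection} already supplies that $S$ is a Seshadri section for $(\C^\times)^\ell$ acting on $N_A$ with $\codim_{N_A}\overline{N_A\smallsetminus(\C^\times)^\ell S}=2$; it remains to observe that $N_A$ is normal, which follows from the $1$-large property of $V_A$ via \cite{HerbigSchwarz}. The criterion of \cite[Corollary, page 169]{PopovSeshadri} and \cite[Theorem 3.14]{PopovVinberg} then makes restriction to $S$ an isomorphism $\C[N_A]^{(\C^\times)^\ell}\to\C[S]^{\mathbf{N}(S)}$. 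For the second arrow, $\phi_\C$ restricts to an isomorphism of varieties $N_B\to S$ by the very definition $S=\phi_\C(N_B)$, while the computation in the proof of Lemma~\ref{lem:SheshadriSection} identifying $\mathbf{N}(S)=\{(t^{\alpha/a_1},\ldots,t^{\alpha/a_\ell})\mid t\in\C^\times\}$ with weight vector $(-\alpha,c_1\beta,\ldots,c_k\beta)$ shows that $\phi_\C$ intertwines the $\C^\times$-action on $N_B$ of weight matrix $B$ with the $\mathbf{N}(S)$-action on $S$. Hence $\phi_\C^\ast$ is an isomorphism $\C[S]^{\mathbf{N}(S)}\to\C[N_B]^{\C^\times}$, and because $\phi_\C$ is linear both maps preserve the total-degree grading.

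To descend to the real algebras I would use that $\phi$ has real coefficients, so $\phi_\C=\phi\otimes_\R\C$ is the complexification of a real map and the composite is compatible with the real forms cut out by the conjugation relations $w_i=\overline{z_i}$ and $v_i=\overline{u_i}$. By $1$-largeness, \cite[Lemma 2.5]{HerbigSchwarzSeaton2} gives $\R[M_0(A)]\otimes_\R\C\simeq\C[N_A]^{(\C^\times)^\ell}$ and $\R[M_0(B)]\otimes_\R\C\simeq\C[N_B]^{\C^\times}$, so taking real points of the composite yields a graded isomorphism $\Psi\co\R[M_0(A)]\to\R[M_0(B)]$. The Poisson property is then checked on invariant representatives: for $f,g\in\R[V_A]^{\T^\ell}$ one must verify $\phi^\ast\{f,g\}_A\equiv\{\phi^\ast f,\phi^\ast g\}_B$ on $Z_B$, and this is the point where Lemma~\ref{lem:SympEmbedShell} is used essentially rather than formally.

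The main obstacle I anticipate is this Poisson compatibility, which is subtler than mere assembly because $\phi^\ast$ does \emph{not} preserve brackets of arbitrary functions: the linear symplectic splitting $V_A=\phi(V_B)\oplus\phi(V_B)^{\perp}$ produces a correction term built from derivatives along $\phi(V_B)^{\perp}$. The resolution I would pursue is to show that $\phi(V_B)^{\perp}$, a symplectic subspace of real dimension $2(\ell-1)$, is spanned by the infinitesimal directions of the $(\ell-1)$-dimensional subtorus of $\T^\ell$ transverse to $\mathbf{N}(S)$ together with their $\omega$-symplectic partners, the moment-gradient directions normal to $Z_A$. Since $f$ and $g$ are $\T^\ell$-invariant, their derivatives along the orbit directions vanish, which forces the correction term to vanish identically on $\phi(V_B)$, and in particular on $Z_B$. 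Establishing this splitting explicitly is the crux; once it is in hand, $\Psi$ is simultaneously graded and Poisson and the corollary follows.
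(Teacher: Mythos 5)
Your proposal is correct, and for the complex half it is essentially the paper's own argument: Lemma~\ref{lem:SheshadriSection} feeds the Chevalley-section criterion of \cite{PopovSeshadri,PopovVinberg} so that restriction of functions is an isomorphism $\C[N_A]^{(\C^\times)^\ell}\to\C[S]^{\mathbf{N}(S)}$; the identification of $\mathbf{N}(S)$ as acting with weight vector $(-\alpha,c_1\beta,\ldots,c_k\beta)$ makes $\phi_\C^\ast$ an isomorphism onto $\C[N_B]^{\C^\times}$; linearity of $\phi_\C$ gives gradedness; and \cite[Lemma 2.5]{HerbigSchwarzSeaton2} descends everything to the real algebras. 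Your insistence on checking normality of $N_A$ is a point the paper passes over silently (Lemma~\ref{lem:SheshadriSection} records only reducedness and irreducibility); normality does hold for $1$-large modules by \cite{HerbigSchwarz}, so this is a refinement rather than an obstacle.

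Where you genuinely diverge is the Poisson step, and your instinct there is sound: the paper dispatches it with the single sentence ``By Lemma~\ref{lem:SympEmbedShell}, this isomorphism is Poisson,'' but pullback along a symplectic embedding is \emph{not} a Poisson map in general, so something specific to invariant functions must be said. Your cross-section argument is the right repair, with two details still to be supplied. First, invariance gives $df(\tau_\xi)=0$ only for the orbit directions $\tau_\xi$ with $\bs{n}\cdot\xi=0$, not for their partners $\sqrt{-1}\,\tau_\xi$; so the correction term $\omega(X_f^{W},X_g^{W})$ (where $W$ is the symplectic complement of $\phi(V_B)$ in $V_A$ and $X_f^{W}$ is the $W$-component of the Hamiltonian vector field) does not vanish because $df|_W=0$ --- it does not --- but because the $\tau_\xi$ span a Lagrangian subspace $L\subset W$: from $df|_L=0$ one gets that $X_f^{W}$ lies in the $\omega$-orthogonal of $L$ inside $W$, which is $L$ itself, and then $\omega(X_f^{W},X_g^{W})=0$ by isotropy of $L$. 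Second, your spanning claim for $W$ holds only at points of $\phi(V_B)$ with $z_1\neq 0$; the bracket identity extends to all of $\phi(V_B)$ by density and continuity. With these two additions your proof is complete and is, on this point, more careful than the paper's.
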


By Lemmas~\ref{lem:SympEmbedShell} and~\ref{lem:SheshadriSection} and Corollary~\ref{cor:Restriction},
it follows that $\phi$ induces an isomorphism between the Zariski closures of the real algebraic varieties
defined by $\R[Z_A]^{\T^\ell}$ and $\R[Z_B]^{\T^1}$.
To complete the proof of Theorem~\ref{thrm:MainSymplectomorph}, it remains only to show that
the semialgebraic conditions are preserved, i.e. the map $\phi$ induces a homeomorphism
between the symplectic quotients.

\begin{lemma}
\label{lem:Semialg}
The map $\phi$ induces a homeomorphism $M_0(B) = Z_B/\T^1 \to M_0(A) = Z_A/\T^\ell$.
\end{lemma}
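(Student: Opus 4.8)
The plan is to realize the desired homeomorphism as the map $\overline{\phi}$ induced on orbit spaces by the symplectic embedding $\phi$ of Lemma~\ref{lem:SympEmbedShell}, and to upgrade the graded Poisson isomorphism $\Psi = \phi^\ast$ of Corollary~\ref{cor:Restriction} from the coordinate rings to the underlying semialgebraic sets. First I would record the relevant equivariance: writing $\bs{t}(s) := (s^{\alpha/a_1},\ldots,s^{\alpha/a_\ell})\in\T^\ell$ for $s\in\T^1$, a direct check on coordinates shows $\phi(s\cdot\bs{z}) = \bs{t}(s)\cdot\phi(\bs{z})$, since under $s\mapsto s^{\alpha/a_i}$ each of the first $\ell$ weights $-a_i$ of $A$ is rescaled to the single weight $-\alpha$ and each block weight $c_r\bs{n}$ is sent to $c_r\beta$. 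As $\phi(Z_B)\subseteq Z_A$, this shows that $\bs{z}\mapsto[\phi(\bs{z})]$ is constant on $\T^1$-orbits, so it descends to a continuous map $\overline{\phi}\co M_0(B) = Z_B/\T^1\to Z_A/\T^\ell = M_0(A)$. It then remains to show that $\overline{\phi}$ is a bijection and a homeomorphism.

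For injectivity I would use that the real polynomial invariants of a compact group action separate orbits, so that $\R[M_0(A)]$ and $\R[M_0(B)]$, viewed as the restrictions to $Z_A$ and $Z_B$ of the off-shell invariants, separate the points of $M_0(A)$ and $M_0(B)$, respectively. If $[\bs{z}]\neq[\bs{z}']$ in $M_0(B)$, choose $h\in\R[M_0(B)]$ with $h(\bs{z})\neq h(\bs{z}')$; since $\Psi = \phi^\ast$ is surjective, $h = g\circ\phi$ for some $g\in\R[M_0(A)]$, whence $g(\phi(\bs{z}))\neq g(\phi(\bs{z}'))$ and $\phi(\bs{z}), \phi(\bs{z}')$ lie in distinct $\T^\ell$-orbits. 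Thus $\overline{\phi}$ is injective.

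Surjectivity is where the real content lies, and I expect it to be the main obstacle. The complex Seshadri section of Lemma~\ref{lem:SheshadriSection} matches an arbitrary orbit in $N_A$ to $S$ by using $\bs{t}\in(\C^\times)^\ell$ to adjust both the phases and the magnitudes of the first $\ell$ coordinates, but over $\R$ the torus $\T^\ell$ can only adjust phases. The rescue is that the shell equations force the magnitudes to be compatible automatically: for $\bs{u}\in Z_A$ the identity $a_i|u_i|^2 = n_i\sum_r c_r|u_{\ell+r}|^2$ together with $m_i = n_i\alpha/a_i$ gives that $|u_i|^2/m_i = \tfrac{1}{\alpha}\sum_r c_r|u_{\ell+r}|^2$ is independent of $i$. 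Hence (using $a_i\neq 0$) one may choose $\bs{t}\in\T^\ell$ so that each $t_i^{-a_i}u_i$ equals the common value $\sqrt{m_i/\beta}\,z_1$ with $z_1\geq 0$; setting $z_{r+1} := \bs{t}^{c_r\bs{n}}u_{\ell+r}$ then gives $\bs{t}\bs{u} = \phi(\bs{z})$, and a short computation reducing to the shell identity just used verifies that $\bs{z}$ satisfies the defining equation~\eqref{eq:MB} of $Z_B$. Thus $[\bs{u}] = \overline{\phi}([\bs{z}])$, and $\overline{\phi}$ is surjective. (Note that when $\sum_r c_r|u_{\ell+r}|^2 = 0$ the point $\bs{u}$ is the origin, so no boundary case needs separate treatment over $\R$.)

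Finally, to see that the continuous bijection $\overline{\phi}$ is a homeomorphism, I would pass to the Hilbert embeddings, under which $M_0(A)$ and $M_0(B)$ are homeomorphic to their images $Y_A\subset\R^p$ and $Y_B\subset\R^q$ and the coordinate functions restrict to generators of $\R[M_0(A)]$ and $\R[M_0(B)]$. Writing each such generator $\sigma_j$ of $\R[M_0(B)]$ as $\Psi^{-1}$ of a polynomial in the generators of $\R[M_0(A)]$ exhibits $\overline{\phi}^{-1}$, in these coordinates, as the restriction of a polynomial map, hence continuous; the identity $(\Psi^{-1}\sigma_j)\circ\overline{\phi} = \sigma_j$ confirms that this polynomial map is the set-theoretic inverse. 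Hence $\overline{\phi}$ is a homeomorphism $M_0(B)\to M_0(A)$, as claimed.
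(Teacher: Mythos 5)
Your proposal is correct, and its core — the equivariance identity $\phi(s\bs{z}) = (s^{\alpha/a_1},\ldots,s^{\alpha/a_\ell})\phi(\bs{z})$ and the surjectivity argument that rotates an arbitrary $\bs{u}\in Z_A$ into $\phi(Z_B)$ using the shell identity $|u_i|^2/m_i = \tfrac{1}{\alpha}\sum_r c_r|u_{\ell+r}|^2$ — is exactly the paper's argument (the paper normalizes $t_1 = 1$ and keeps the phase of $u_1$, whereas you rotate all coordinates to make $z_1\geq 0$; this is immaterial). Where you genuinely diverge is in completing the homeomorphism claim. The paper simply exhibits an explicit inverse, asserting (and leaving to the reader) that the inverse homeomorphism is induced by the linear map $(u_1,\ldots,u_{k+\ell})\mapsto(\sqrt{\beta/m_1}\,u_1,u_{\ell+1},\ldots,u_{k+\ell})$, which gives injectivity and bicontinuity at once. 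You instead get injectivity abstractly from the fact that invariant polynomials of a compact group separate orbits, combined with surjectivity of $\Psi=\phi^\ast$ from Corollary~\ref{cor:Restriction}, and you get continuity of the inverse by passing to the Hilbert embeddings and expressing $\overline{\phi}^{-1}$ in those coordinates as the restriction of the polynomial map determined by $\Psi^{-1}$. (One wording slip: you should say that $\Psi^{-1}(\sigma_j)$ is a polynomial in the generators of $\R[M_0(A)]$, not that $\sigma_j$ is ``$\Psi^{-1}$ of'' such a polynomial; your subsequent identity $(\Psi^{-1}\sigma_j)\circ\overline{\phi}=\sigma_j$ makes clear what is meant.) The trade-off: the paper's route is more explicit and self-contained but leaves a verification to the reader; yours leans on Corollary~\ref{cor:Restriction} and on standard facts about Hilbert embeddings and orbit separation, yielding a complete argument with no computation deferred, at the cost of being less constructive — it does not produce the simple linear formula for the inverse that the paper records.
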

\begin{proof}
It is clear that $\phi$ maps $\T^1$-orbits into $\T^\ell$-orbits, as if
$\bs{z} = (z_1,\ldots,z_{k+1})\in Z_B$ and $t \in \T^1$, then
\begin{align*}
    \phi(t\bs{z})
        &=      \phi( t^{-\alpha} z_1, t^{c_1\beta} z_2, \ldots, t^{c_k\beta} z_{k+1})
        \\&=    \left( \sqrt{\frac{m_1}{\beta}}t^{-\alpha} z_1, \ldots,
                    \sqrt{\frac{m_\ell}{\beta}} t^{-\alpha} z_1, t^\beta z_2, \ldots, t^\beta z_{k+1}
                \right)
        \\&=    \left( \sqrt{\frac{m_1}{\beta}}(t^{\alpha/a_1})^{-a_1} z_1, \ldots,
                    \sqrt{\frac{m_\ell}{\beta}} (t^{\alpha/a_\ell})^{-a_\ell} z_1, \right.
        \\&\quad\quad\quad\quad
                \left.
                    (t^{\alpha/a_1})^{c_1 n_1}\cdots(t^{\alpha/a_\ell})^{c_1 n_\ell} z_2, \ldots,
                    (t^{\alpha/a_1})^{c_k n_1}\cdots(t^{\alpha/a_\ell})^{c_k n_\ell} z_{k+1}
                \right)
        \\&=    (t^{\alpha/a_1},\cdots,t^{\alpha/a_\ell}) \phi(\bs{z}).
\end{align*}
As $\phi(Z_B) \subset (Z_A)$ by Lemma~\ref{lem:SympEmbedShell}, it is sufficient to show that
each element of $Z_A$ is in the orbit of an element of $\phi(Z_B)$. So let
$\bs{u} = (u_1,\ldots,u_n) \in Z_A$ so that for $i=1,\ldots,\ell$,
\[
    -a_i u_i \overline{u_i} + n_i\sum\limits_{j=1}^k c_j u_{\ell+j} \overline{u_{\ell+j}} = 0,
    \quad\quad\mbox{i.e.}\quad\quad
    \frac{a_i}{n_i} u_i \overline{u_i} = \sum\limits_{j=1}^k c_j u_{\ell+j} \overline{u_{\ell+j}}.
\]
As each $a_i, n_i, c_j > 0$, it follows that if $u_i = 0$ for some $i\leq\ell$, then
$u_i = 0$ for each $i > \ell$, i.e. $\bs{u} = \bs{0} = \phi(\bs{0})$.
Hence, we may assume each $u_i$ is nonzero. Then for $i = 2,\ldots,\ell$, we have
\[
    |u_i|   =       \sqrt{\frac{a_1 n_i}{a_i n_1}} |u_1|
            =       \sqrt{\frac{m_i}{m_1}} |u_1|.
\]
Hence for $i = 2,\ldots,\ell$, there is a $t_i\in\T^1$ such that
\[
    t_i^{-a_i} u_i
        =       \sqrt{\frac{m_i}{m_1}} u_1.
\]
Then setting $\bs{t} := (1,t_2,\ldots,t_\ell)$, $z_1 := u_1\sqrt{\beta/m_1}$, and
$z_{i+1} := \bs{t}^{c_i \bs{n}} u_{\ell+i}$ for $i > 1$, we have that
\begin{align*}
    \bs{t}(u_1,\ldots,u_n)
        &=          (u_1, t_2^{-a_2}u_2, \ldots, t_\ell^{-a_\ell} u_\ell,
                        \bs{t}^{c_1\bs{n}} u_{\ell+1}, \ldots, \bs{t}^{c_k\bs{n}} u_n)
        \\&=        \left( \sqrt{\frac{m_1}{\beta}}z_1,\sqrt{\frac{m_2}{\beta}}z_1,\ldots,
                        \sqrt{\frac{m_\ell}{\beta}}z_1, z_2, z_3, \ldots, z_{k+1}
                    \right)
        \quad =          \phi(z_1,\ldots,z_{k+1}).
\end{align*}
Finally, we note that $(z_1,\ldots,z_{k+1})$ satisfy Equation~\eqref{eq:MB}, as
\begin{align*}
    - \alpha z_1\overline{z_1} + \beta\sum\limits_{j=1}^k c_j z_{j+1}\overline{z_{j+1}}
    &=      - \frac{\beta \alpha}{m_1} u_1 \overline{u_1}
                + \beta\sum\limits_{j=1}^k c_j u_{\ell+j}\overline{u_{\ell+j}}
    \\&=    \frac{\beta}{n_1} \left(- a_1 u_1 \overline{u_1}
                + n_1\sum\limits_{j=1}^k c_j u_{\ell+j}\overline{u_{\ell+j}}
            \right)
    \quad =  0,
\end{align*}
so that $\bs{t}(u_1,\ldots,u_n) \in \phi(Z_B)$. It follows that each $\T^\ell$-orbit in
$Z_A$ intersects $\phi(Z_B)$.

We leave it to the reader to show that the inverse homeomorphism is induced by the linear map
\[(u_1,u_2,\dots,u_{k+ \ell}) \mapsto(\sqrt{\beta\over m_1}u_1,u_{\ell+1},\dots, u_{k+ \ell}).\]
\end{proof}

We illustrate Theorem~\ref{thrm:MainSymplectomorph} with the following.

\begin{example}
\label{ex:Gen4}
The weight matrix
\[
    A   =   \begin{pmatrix}
                -3  &   0   &   0   &   1   &   2   &   3   &   3   \\
                0   &   -4  &   0   &   3   &   6   &   9   &   9   \\
                0   &   0   &   -5  &   2   &   4   &   6   &   6
            \end{pmatrix}
\]
is Type II$_4$ with $\alpha = 60$, $n_1 = 1$, $n_2 = 3$, $n_3 = 2$,
$c_1 = 1$, $c_2 = 2$, and $c_3 = c_4 = 3$. Hence, $m_1 = 20$, $m_2 = 45$, $m_3 = 24$,
and $\beta = 89$, and the symplectic quotient $M_0(A)$
is graded regularly symplectomorphic to that associated to $(-60, 89, 178, 267, 267)$.
\end{example}


\section{Constructive Approach to Theorem~\ref{thrm:MainSymplectomorph}}
\label{sec:ConstrucProof}

We first obtained a proof of Theorem~\ref{thrm:MainSymplectomorph} for Type I$_k$
matrices by determining an explicit description of the symplectic quotient $M_0$
and algebra $\R[M_0]$ of regular functions. This description may be of independent
interest and illustrates the structure of these spaces, so we include it here.
The proofs of these results are cumbersome computations and hence only summarized.

\begin{proposition}
\label{prop:ConstrucHBasis}
Let $A = [D, \overbrace{\bs{n},\ldots, \bs{n}}^k]\in\Z^{\ell\times(\ell+k)}$ be a type I$_k$ weight matrix
such that $V_A$ is a faithful $\T^\ell$-module. Then a generating set for the algebra
$\R[V_A]^{\T^\ell}$ of invariants is given by
\begin{enumerate}
\item   the $\ell$ quadratic monomials $r_i := z_i \overline{z_i}$ for $i = 1,\ldots,\ell$,
\item   the $k^2$ quadratic monomials $p_{i,j} := z_{\ell+i}\overline{z_{\ell+j}}$ for $1\leq i,j\leq k$,
\item   the ${\alpha+k-1 \choose k-1}$ degree $\eta$ monomials
        $q_{\bs{s}} := \prod_{i=1}^\ell z_i^{m_i} \prod_{i=1}^{k} z_{\ell+i}^{s_i}$ where
        $\bs{s} = (s_1,\ldots,s_k)$ and the $s_i$ are any choice of nonnegative integers
        such that $\sum_{i=1}^{k} s_i = \alpha$, and
\item   the ${\alpha+k-1 \choose k-1}$ degree $\eta$ monomials $\overline{q_{\bs{s}}}$ for each choice of $\bs{s}$.
\end{enumerate}
For a generating set for $\R[M_0(A)]$, the generators in (1) can be omitted using the
on-shell relations.
\end{proposition}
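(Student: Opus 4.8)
The plan is to reduce the entire statement to a combinatorial analysis of invariant monomials. Since $\T^\ell$ is reductive and acts diagonally in the chosen coordinates, after complexifying (recall from Section~\ref{sec:Background} that $V_A\otimes_\R\C \cong V_A\oplus V_A^\ast$) the algebra of invariants is spanned over $\C$ by the monomials
\[
    \mu = \prod_{i=1}^\ell z_i^{b_i}\overline{z_i}^{d_i}\prod_{r=1}^k z_{\ell+r}^{e_r}\overline{z_{\ell+r}}^{f_r}
\]
that are fixed by $\T^\ell$, so it suffices to write every such $\mu$ as a product of the monomials listed in (1)--(4) and then pass to real and imaginary parts. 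First I would record the invariance condition explicitly: reading the weight of $\mu$ off the columns of $A$, the monomial $\mu$ is invariant exactly when, for each $j=1,\ldots,\ell$,
\[
    a_j(b_j-d_j) = n_j P, \qquad P := \sum_{r=1}^k (e_r-f_r).
\]

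The arithmetic heart of the argument is the next step, where faithfulness enters. By Lemma~\ref{lem:Faithful}, since $c_r=1$ for a Type~I$_k$ matrix, we have $\gcd(a_j,n_j)=1$ for each $j$ and the $a_j$ are pairwise coprime. From $a_j\mid n_j P$ together with $\gcd(a_j,n_j)=1$ I obtain $a_j\mid P$ for every $j$, and pairwise coprimality then forces $\alpha=\lcm(a_1,\ldots,a_\ell)\mid P$. Writing $P=\alpha Q$ for an integer $Q$ and using $m_j = n_j\alpha/a_j$, the invariance condition collapses to
\[
    b_j-d_j = m_j Q \quad (1\le j\le\ell), \qquad \sum_{r=1}^k (e_r-f_r) = \alpha Q,
\]
so a single integer $Q$ controls the whole monomial. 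Isolating $Q$ in this way is what makes the subsequent reduction clean.

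Now I would induct on $|Q|$. For $Q>0$ each $b_j\ge m_j Q\ge m_j$ and $\sum_r e_r\ge \alpha Q\ge\alpha$, so I may choose nonnegative integers $s_r\le e_r$ with $\sum_r s_r=\alpha$ and divide $\mu$ by the generator $q_{\bs{s}}=\prod_i z_i^{m_i}\prod_r z_{\ell+r}^{s_r}$; the quotient is again a monomial with all exponents nonnegative whose parameter has dropped to $Q-1$. Symmetrically, for $Q<0$ the antiholomorphic exponents are large enough to factor out an $\overline{q_{\bs{s}}}$ and raise $Q$ toward $0$. The base case $Q=0$ reads $b_j=d_j$ and $\sum_r e_r=\sum_r f_r$: the factors $z_j\overline{z_j}=r_j$ absorb the first $\ell$ variables, and the monomial in the remaining $k$ variables has equal holomorphic and antiholomorphic degree, so a direct pairing of each $z_{\ell+i}$ with an $\overline{z_{\ell+j}}$, i.e. the first fundamental theorem for the diagonal $\C^\times$-action on the last $k$ coordinates, expresses it as a product of the $p_{i,j}$. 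This exhibits $\mu$ in terms of the listed generators.

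The main obstacle is not any single deep step but keeping the exponent bookkeeping honest across the induction: one must check that the greedy choice of $\bs{s}$ can always be made within the available exponents and that dividing out $q_{\bs{s}}$ (resp.\ $\overline{q_{\bs{s}}}$) genuinely returns an invariant monomial with the parameter decremented, which is precisely why reducing to the single integer $Q$ pays off. The divisibility $\alpha\mid P$ is the place where faithfulness is indispensable. The counts $\binom{\alpha+k-1}{k-1}$ in (3) and (4) are just the number of compositions $s_1+\cdots+s_k=\alpha$ into $k$ nonnegative parts, and the final sentence of the proposition follows because $J_i=\tfrac{1}{2}\bigl(-a_i r_i + n_i\sum_{r=1}^k p_{r,r}\bigr)$ vanishes on the shell, so each $r_i$ is expressible through the $p_{r,r}$ in $\R[M_0(A)]$.
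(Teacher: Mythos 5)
Your proof is correct, but it takes a genuinely different route from the paper's. The paper first proves the case $k=1$ by induction on $\ell$, comparing the invariants of $A$ with those of the submatrices obtained by deleting a row and the resulting zero column, and then handles general $k$ via the specialization map sending $z_i\mapsto w_i$ for $i\le\ell$ and $z_{\ell+i}\mapsto w_{\ell+1}$, which carries $A$-invariants onto $[D,\bs{n}]$-invariants; the general case is finished by analyzing preimages under this map, i.e. by reducing back to a $k=1$ situation. You instead argue monomial by monomial: since the complexified torus acts diagonally, the invariant algebra is spanned by invariant monomials, and the faithfulness hypothesis (via Lemma~\ref{lem:Faithful}, which for Type I$_k$ gives $\gcd(a_j,n_j)=1$) lets you collapse the invariance conditions to a single integer parameter $Q$ with $b_j-d_j=m_jQ$ and $\sum_r(e_r-f_r)=\alpha Q$; induction on $|Q|$, peeling off one $q_{\bs{s}}$ or $\overline{q_{\bs{s}}}$ per step and pairing each $z_{\ell+i}$ with some $\overline{z_{\ell+j}}$ in the base case $Q=0$, then produces the desired factorization, and the exponent bookkeeping you flag does check out (the quotient monomial has nonnegative exponents and parameter $Q\mp1$). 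Your argument is more self-contained and effectively algorithmic — it gives an explicit procedure for rewriting any invariant monomial in the listed generators in one pass — whereas the paper's reduction maps between different values of $k$ set up machinery that the authors reuse (in modified form) for the relations in Proposition~\ref{prop:ConstrucRelations}, which your method does not by itself address. One small streamlining: pairwise coprimality of the $a_j$ is not actually needed in your divisibility step, since $a_j\mid P$ for every $j$ already forces $\alpha=\lcm(a_1,\ldots,a_\ell)\mid P$; only $\gcd(a_j,n_j)=1$ is used.
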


A simple computation demonstrates that each of the monomials listed in Proposition~\ref{prop:ConstrucHBasis}
is invariant.
To prove the proposition, one first establishes the result when $k=1$ by induction on $\ell$;
the base case is simple, and the inductive step is accomplished by comparing the invariants
of $A$ to those corresponding to submatrices formed by removing a single row and the resulting
column of zeros. For general $k$, consider the map
$\phi\co\R[z_1,\ldots,z_{\ell+k},\overline{z_1},\ldots,\overline{z_{\ell+k}}]
\to \R[w_1,\ldots, w_{\ell+1},\overline{w_1},\ldots, \overline{w_{\ell+1}}]$ that maps
$z_i\mapsto w_i$ and $\overline{z_i}\mapsto\overline{w_i}$ for $i \leq \ell$,
$z_{\ell+i}\mapsto w_{\ell+1}$, and $\overline{z_{\ell+i}}\mapsto \overline{w_{\ell+1}}$.
It is easy to see that $\phi$ maps $A$-invariants onto $[D, \mathbf{n}]$-invariants,
and then the proof is completed by considering the preimages of the $[D, \mathbf{n}]$-invariants,
a case with $k=1$.

\begin{proposition}
\label{prop:ConstrucRelations}
Let $A = [D, \overbrace{\bs{n},\ldots, \bs{n}}^k]\in\Z^{\ell\times(\ell+k)}$ be a type I$_k$ weight matrix
such that $V_A$ is a faithful $\T^\ell$-module. The (off-shell) relations among the $r_i$, $p_{i,j}$,
$q_{\bs{\alpha}}$, and $\overline{q_{\bs{\alpha}}}$ are generated by the following.
\begin{enumerate}
\item   $p_{g,h}p_{i,j} - p_{g,j}p_{i,h}$ for $1\leq g,h,i,j \leq k$ with
        $g\neq i$ and $h\neq j$.
\item   $p_{g,h} q_{\bs{s}} - p_{i,h} q_{\bs{s^\prime}}$ where $s_g^\prime = s_g + 1$,
        $s_i^\prime = s_i - 1$, and $s_j^\prime = s_j$ for $j \neq g,i$.
        Note that we must have $s_i \geq 1$.
\item   $p_{g,h} \overline{q_{\bs{s}}} - p_{g,i} \overline{q_{\bs{s^\prime}}}$
        where $s_g^\prime = s_g + 1$,
        $s_i^\prime = s_i - 1$, and $s_j^\prime = s_j$ for $j \neq g,i$.
        Note that we must have $s_i \geq 1$.
\item   $q_{\bs{s}} q_{\bs{s^\prime}} - q_{\bs{t}} q_{\bs{t^\prime}}$
        where $\bs{s} + \bs{s^\prime} = \bs{t} + \bs{t^\prime}$ and $\bs{s}\neq\bs{t}$.
\item   $\overline{q_{\bs{s}}}\, \overline{q_{\bs{s^\prime}}}
            - \overline{q_{\bs{t}}}\, \overline{q_{\bs{t^\prime}}}$
        where $\bs{s} + \bs{s^\prime} = \bs{t} + \bs{t^\prime}$ and $\bs{s}\neq\bs{t}$.
\item   $\prod_{i=1}^\ell r_i^{m_i} \prod_{j=1}^\alpha p_{g_j,h_j}
            - q_{\bs{s}} \overline{q_{\bs{s^\prime}}}$ where the vector $(g_1, \ldots, g_\alpha)$
            contains each value $g$ exactly $s_g$ times and the vector
            $(h_1,\ldots, h_\alpha)$ contains each value $h$ exactly $s_h^\prime$ times.
\end{enumerate}
On-shell, the monomials additionally satisfy the defining relations of the moment map,
$-a_i r_i + n_i \sum_{j=1}^k p_{j,j}$ for $i=1,\ldots,\ell$.
\end{proposition}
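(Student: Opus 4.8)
The plan is to recognize the invariant ring as an affine semigroup (toric) algebra and thereby reduce the computation of relations to a combinatorial straightening problem. After complexifying, $\R[V_A]^{\T^\ell}\otimes_\R\C$ is spanned by the invariant monomials $\prod_{j=1}^{\ell+k} z_j^{a_j}\overline{z_j}^{b_j}$, and such a monomial is invariant exactly when $A(\bs a-\bs b)=0$ for $\bs a,\bs b\in\N^{\ell+k}$. Hence the invariants form the semigroup algebra of $\Sigma:=\{(\bs a,\bs b)\in\N^{\ell+k}\times\N^{\ell+k}\mid A(\bs a-\bs b)=0\}$, and Proposition~\ref{prop:ConstrucHBasis} says that $r_i$, $p_{i,j}$, $q_{\bs s}$, $\overline{q_{\bs s}}$ generate $\Sigma$. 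Because every generator is a single monomial in the $z_j,\overline{z_j}$, the kernel of the presentation sending each indeterminate to its monomial is a binomial ideal: it is generated by differences $\bs x^{\bs u}-\bs x^{\bs w}$ of two words in the generators having the same image in $\Sigma$. This is the key structural reduction, turning the statement into the assertion that the six families of binomials connect every fiber of the generator map over $\Sigma$.

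The first step is to confirm that (1)--(6) are genuine relations, which amounts to checking that the two monomials in each binomial have equal exponent vector in the $z_j,\overline{z_j}$; these are short exponent-bookkeeping computations. For instance, in (6) both $\prod_{i}r_i^{m_i}\prod_j p_{g_j,h_j}$ and $q_{\bs s}\overline{q_{\bs s'}}$ evaluate to $\prod_{i\le\ell}(z_i\overline{z_i})^{m_i}\prod_g z_{\ell+g}^{s_g}\prod_h \overline{z_{\ell+h}}^{s'_h}$, using that $(g_1,\dots,g_\alpha)$ and $(h_1,\dots,h_\alpha)$ record the multiplicities $s_g$ and $s'_h$. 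Relations (1) and (4)--(5) are the usual Segre (determinantal) and Veronese straightening relations in the $z_{\ell+i}$, respectively their conjugates, while (2)--(3) record the single admissible way to trade a factor $z_{\ell+i}$ (resp.\ $\overline{z_{\ell+i}}$) between a $p$ and a $q$ (resp.\ $\overline q$).

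The heart of the proof is completeness: showing every binomial relation lies in the ideal generated by (1)--(6). I would argue by a normal-form (straightening) procedure, showing that any word in the generators can be rewritten, using the six move-types, into a canonical representative depending only on its image $(\bs a,\bs b)\in\Sigma$; two words with the same image then differ by the listed relations. Concretely, one first uses (6) to eliminate cancelling $q_{\bs s}$/$\overline{q_{\bs s'}}$ pairs, so that after reduction a word contains $q$'s and $\overline q$'s in a normalized amount governed by $\bs a-\bs b$ on the first $\ell$ coordinates (recall $a_i-b_i=m_i(\#q-\#\overline q)$ for $i\le\ell$); then (4)--(5) sort the remaining $q$- and $\overline q$-exponents into a fixed Veronese normal form, (1) puts the $p_{i,j}$ exponents into a staircase (rank-one) normal form, and finally (2)--(3) move the residual last-block exponents into their canonical slots. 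Choosing a term order compatible with these moves, one verifies that the procedure terminates and is confluent, equivalently that the binomials form a Gr\"obner basis via Buchberger's criterion.

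The main obstacle is precisely this confluence/termination analysis: relation (6) alters many generators at once and couples the $r_i$- and $p_{i,j}$-blocks to the high-degree generators $q_{\bs s},\overline{q_{\bs s}}$ of degree $\eta$, so care is needed to order the reductions and to verify that no further relations are required; this is the source of the ``cumbersome computation.'' Finally, the on-shell addendum is immediate from the background: since $V_A$ is $1$-large, \cite[Corollary 4.3]{HerbigSchwarz} gives $I_Z=(J_1,\dots,J_\ell)$, and for a Type I$_k$ matrix (all $c_r=1$) one has $J_i=\tfrac12\big(-a_i r_i+n_i\sum_{j=1}^k p_{j,j}\big)$ with $p_{j,j}=z_{\ell+j}\overline{z_{\ell+j}}$, so the on-shell relations are generated by (1)--(6) together with the stated moment-map binomials.
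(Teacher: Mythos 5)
Your proposal is correct in substance but takes a genuinely different route to completeness than the paper does. Both proofs begin the same way: checking that (1)--(6) hold is pure exponent bookkeeping, and your treatment of the on-shell addendum via $1$-largeness and \cite[Corollary 4.3]{HerbigSchwarz} is exactly what the paper's background section sets up. The divergence is in proving that (1)--(6) generate \emph{all} relations. The paper argues by induction on $k$: for $k=1$ there is a single nontrivial relation $p_{1,1}^\alpha\prod_{i=1}^\ell r_i^{m_i} - q_{(\alpha)}\overline{q_{(\alpha)}}$, handled by a case analysis, and the inductive step passes from $k+1$ to $k$ by examining preimages of invariants under the specialization $(z_1,\ldots,z_{\ell+k+1})\mapsto(z_1,\ldots,z_{\ell+k}+z_{\ell+k+1})$. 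You instead complexify, observe that the invariant ring is the semigroup algebra of $\Sigma=\{(\bs a,\bs b)\in\N^{\ell+k}\times\N^{\ell+k}\mid A(\bs a-\bs b)=0\}$, so the relation ideal is toric (binomial), and then propose a straightening argument: use (6) to cancel $q\,\overline{q}$ pairs, (4)--(5) to normalize the $q$- and $\overline{q}$-exponents, (1) for Segre normal form on the $p_{i,j}$, and (2)--(3) to shuttle last-block exponents --- i.e., connectivity of the fibers of the monomial map, or equivalently a Gr\"obner-basis/confluence verification. Your reduction is sound (the bookkeeping identity $a_i-b_i=m_i(\#q-\#\overline q)$ for $i\le\ell$ does pin down the normal form), and it buys uniformity in $k$, contact with standard toric/Segre--Veronese theory, and potentially a Gr\"obner basis as a byproduct, which would be useful for Hilbert series computations elsewhere in the paper; the paper's induction, by contrast, recycles the already-established $k=1$ analysis and avoids setting up any toric machinery. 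Be aware that the burden you have deferred --- termination and confluence of the rewriting system, with relation (6) coupling all three blocks of generators at once --- is precisely the ``cumbersome computation'' the paper also declines to print, so your outline is at a comparable, not greater, level of completeness than the published summary.
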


One verifies that each of these relations holds by direct computation using the definitions
of the monomials given in Proposition~\ref{prop:ConstrucHBasis}. The proof that all relations
are generated by these is by induction on $k$. For the case $k = 1$, there is only one nontrivial
relation, $p_{1,1}^\alpha \prod_{i=1}^\ell r_i^{m_i} - q_{(\alpha)} \overline{q_{(\alpha)}}$;
a simple yet tedious consideration of cases demonstrates that this generates all relations.
The induction step is demonstrated by considering the preimages of invariants under the map
$\C[z_1,\ldots,z_{\ell+k+1}]\to\C[z_1,\ldots,z_{\ell+k}]$ given by
$(z_1,\ldots,z_{\ell+k+1})\mapsto(z_1,\ldots,z_{\ell+k} + z_{\ell+k+1})$.

One then verifies the following by direct computation.

\begin{proposition}
\label{prop:ConstrucPoisson}
Let $A = [D, \overbrace{\bs{n},\ldots, \bs{n}}^k]\in\Z^{\ell\times(\ell+k)}$ be a type I$_k$ weight matrix
such that $V_A$ is a faithful $\T^\ell$-module. The Poisson brackets of the Hilbert basis elements
given in Proposition~\ref{prop:ConstrucHBasis} are as follows. Note that the indices
$g,h,i,j$ need not be distinct unless otherwise noted.
\begin{itemize}
\item   $\{ r_g, r_h\} = \{ r_g, p_{h,i}\} = \{ q_{\bs{s}}, q_{\bs{s^\prime}} \}
            = \{ \overline{q_{\bs{s}}}, \overline{q_{\bs{s^\prime}}} \} = 0$.
\item   $\{ r_i, q_{\bs{s}} \} = - \frac{2}{\sqrt{-1}} m_i q_{\bs{s}}$.
\item   $\{ r_i, \overline{q_{\bs{s}}} \} = \frac{2}{\sqrt{-1}} m_i \overline{q_{\bs{s}}}$.
\item   $\{ p_{g,h}, p_{i,j}\} = \begin{cases}
            \frac{2}{\sqrt{-1}} p_{i,h},   &   \mbox{$g = j$ and $h\neq i$},
            \\
            - \frac{2}{\sqrt{-1}} p_{g,j}, &   \mbox{$g \neq j$ and $h = i$},
            \\
            \frac{2}{\sqrt{-1}}(p_{h,h} - p_{g,g})  &
                                \mbox{$g = j$ and $h = i$, and $g\neq h$}
            \\
            0,      &   \mbox{$g \neq j$ and $h\neq i$ or $g = j = h = i$}.
            \end{cases}$.
\item   $\{ p_{g,h}, q_{\bs{s}} \} = \begin{cases}
            -\frac{2}{\sqrt{-1}} s_g q_{\bs{s^\prime}},   &   s_g > 0,
            \\
            0,      &   s_g = 0,\end{cases}$ \\ where
            $s_g^\prime = s_g - 1$, $s_h^\prime = s_h + 1$, and $s_i^\prime = s_i$
            for $i\neq g, h$.
\item   $\{ p_{g,h}, \overline{q_{\bs{s}}} \} = \begin{cases}
            \frac{2}{\sqrt{-1}} s_g \overline{q_{\bs{s^\prime}}},   &   s_g > 0,
            \\
            0,      &   s_g = 0,\end{cases}$ \\ where
            $s_g^\prime = s_g - 1$, $s_h^\prime = s_h + 1$, and $s_i^\prime = s_i$
            for $i\neq g, h$.
\item   $\{ q_{\bs{s}}, \overline{q_{\bs{s^\prime}}} \} = \frac{2}{\sqrt{-1}}
        q_{\bs{s}} \overline{q_{\bs{s^\prime}}} \left(\sum_{i=1}^\ell \frac{m_i^2}{r_i}
            + \sum_{j=1}^k \frac{s_j s_j^\prime}{p_{j,j}}\right)$, which we note
        is polynomial as the $r_i$ and $p_{j,j}$ divide $q_{\bs{s}} q_{\bs{s^\prime}}$.
\end{itemize}
\end{proposition}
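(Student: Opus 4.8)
The plan is to exploit that the Poisson bracket on $\mathcal C^\infty(V)$ is the unique biderivation extending the fundamental brackets $\{z_i,\overline{z_j}\}=-2\sqrt{-1}\,\delta_{ij}$ and $\{z_i,z_j\}=\{\overline{z_i},\overline{z_j}\}=0$. Equivalently, for any functions $f,g$ of the $z_i,\overline{z_i}$ one has the closed formula
\[
    \{f,g\}=\frac{2}{\sqrt{-1}}\sum_{i=1}^n\left(\frac{\partial f}{\partial z_i}\frac{\partial g}{\partial \overline{z_i}}-\frac{\partial f}{\partial \overline{z_i}}\frac{\partial g}{\partial z_i}\right),
\]
where we have used $\frac{2}{\sqrt{-1}}=-2\sqrt{-1}$. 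Since every generator in Proposition~\ref{prop:ConstrucHBasis} is a monomial, its partial derivatives are immediate: differentiating a monomial by a variable it contains multiplies it by the corresponding exponent and divides by that variable. First I would record these derivatives, stressing the structural dichotomy that $q_{\bs s}$ is holomorphic (a polynomial in the $z_i$ alone) and $\overline{q_{\bs s}}$ antiholomorphic, whereas $r_i=z_i\overline{z_i}$ and $p_{g,h}=z_{\ell+g}\overline{z_{\ell+h}}$ each carry exactly one $z$- and one $\overline z$-factor.

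Then I would substitute into the formula case by case. The vanishing brackets split into two types: $\{q_{\bs s},q_{\bs s'}\}$ and $\{\overline{q_{\bs s}},\overline{q_{\bs s'}}\}$ vanish because holomorphicity (resp.\ antiholomorphicity) annihilates both summands, while $\{r_g,r_h\}$ and $\{r_g,p_{h,i}\}$ vanish because the two monomials involve disjoint variables (the $r_i$ use indices $\le\ell$, the $p_{h,i}$ indices $>\ell$), with $\{r_g,r_g\}$ vanishing by antisymmetry. In every nonvanishing bracket having a $q_{\bs s}$ or $\overline{q_{\bs s}}$ as one argument, holomorphicity kills one of the two summands, so the bracket collapses to a single term: the surviving contribution comes from pairing the antiholomorphic factor of one argument with the matching holomorphic factor of the other, which reduces to reading off the corresponding exponent. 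Keeping track of which index is thereby lowered and which is raised is the main bookkeeping to perform carefully. In this way $\{r_i,q_{\bs s}\}$, $\{p_{g,h},q_{\bs s}\}$, and their conjugates follow at once, and the richest term $\{q_{\bs s},\overline{q_{\bs s'}}\}$ emerges as the sum over all variables of the products of matching exponents, yielding precisely the factor $\sum_{i=1}^\ell m_i^2/r_i+\sum_{j=1}^k s_j s_j'/p_{j,j}$.

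The only genuinely multi-case computation is $\{p_{g,h},p_{i,j}\}$, where both summands of the formula can survive because each $p$ carries a $z$- and a $\overline z$-factor. Here I would note that $\partial_{z_{\ell+m}}p_{g,h}$ is nonzero only for $m=g$ and $\partial_{\overline{z_{\ell+m}}}p_{i,j}$ only for $m=j$, so the first summand contributes $p_{i,h}$ exactly when $g=j$; symmetrically the second summand contributes $-p_{g,j}$ exactly when $h=i$. Tallying the four combinations of these index coincidences---and separating the degenerate case $g=j=h=i$, where the two contributions cancel---reproduces the four-way split in the statement. I expect this index bookkeeping, together with consistent use of $\frac{2}{\sqrt{-1}}=-2\sqrt{-1}$, to be the only points requiring care; there is no conceptual obstacle, the whole proposition being a direct application of the biderivation property. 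Finally, the parenthetical polynomiality of $\{q_{\bs s},\overline{q_{\bs s'}}\}$ is immediate from the formula: each $r_i$ divides $q_{\bs s}\overline{q_{\bs s'}}$ since $m_i\ge 1$, and each $p_{j,j}$-term carries the coefficient $s_j s_j'$, which vanishes unless both exponents are positive, in which case $p_{j,j}$ divides $q_{\bs s}\overline{q_{\bs s'}}$.
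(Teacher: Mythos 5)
Your proposal is correct and is essentially the paper's own argument: the paper gives no more than the remark that the brackets are ``verified by direct computation,'' i.e.\ exactly your expansion of the biderivation determined by $\{z_i,\overline{z_j}\}=-2\sqrt{-1}\,\delta_{ij}$, and your case analysis (holomorphicity killing one summand, disjoint variables and antisymmetry for the vanishing brackets, the four-way index split for $\{p_{g,h},p_{i,j}\}$) is precisely the required bookkeeping. One warning, since you rightly single out the raising/lowering of indices as the delicate point: carried out faithfully, your method yields
\[
\{p_{g,h},q_{\bs{s}}\}=-\tfrac{2}{\sqrt{-1}}\,s_h\,q_{\bs{s^\prime}},\qquad
s_h^\prime=s_h-1,\quad s_g^\prime=s_g+1,
\]
vanishing exactly when $s_h=0$, because it is the antiholomorphic factor $\overline{z_{\ell+h}}$ of $p_{g,h}$ that pairs with $q_{\bs{s}}$; this is the transpose ($g\leftrightarrow h$) of the entry as printed in the proposition, and indeed the printed $\{p_{g,h},q_{\bs{s}}\}$ entry is incompatible with the (correct) $\{p_{g,h},\overline{q_{\bs{s}}}\}$ entry under the symmetry $\overline{\{f,g\}}=\{\overline{f},\overline{g}\}$, so when your computation disagrees with the statement at that one line, trust your computation.
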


The above results give an explicit description of the Poisson algebra of regular functions. It remains
only to determine the semialgebraic description of the symplectic quotient.

\begin{proposition}
\label{prop:ConstrucInequalities}
Let $A = [D, \overbrace{\bs{n},\ldots, \bs{n}}^k]\in\Z^{\ell\times(\ell+k)}$ be a type I$_k$ weight matrix associated
such that $V_A$ is a faithful $\T^\ell$-module. Using the real Hilbert basis given by the real and imaginary parts of
the monomials listed in Proposition~\ref{prop:ConstrucHBasis}, the image of the Hilbert embedding is described
by the relations given in Proposition~\ref{prop:ConstrucRelations} as well as the
inequalities $r_i \geq 0$ for $i = 1,\ldots, \ell$ and $p_{j,j} \geq 0$ for
$j = 1, \ldots, k$.
\end{proposition}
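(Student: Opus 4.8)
The plan is to determine the semialgebraic inequalities cutting out the image $Y = \phi(Z_A)$ (equivalently $X = \phi(V_A)$) inside its Zariski closure, following the general theory of Procesi--Schwarz \cite{ProcesiSchwarz}. The key structural fact I would exploit is that the listed off-shell relations already generate the ideal defining the Zariski closure $\overline{X}$ (by Proposition~\ref{prop:ConstrucRelations}), so the only remaining task is to identify the inequalities. Since $V_A$ is a complex vector space and the invariants in Proposition~\ref{prop:ConstrucHBasis} are built from the monomials $r_i = z_i\overline{z_i}$ and $p_{i,j} = z_{\ell+i}\overline{z_{\ell+j}}$, the natural candidates for inequalities are the \emph{positive-semidefiniteness} conditions: each diagonal quadratic monomial $r_i$ and $p_{j,j}$ is a sum of squares of the real and imaginary parts of $z_i$ (resp. $z_{\ell+i}$), hence manifestly nonnegative on all of $V_A$.

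First I would verify the easy direction: that every point of $X = \phi(V_A)$ satisfies $r_i \geq 0$ and $p_{j,j}\geq 0$. This is immediate since $r_i = |z_i|^2$ and $p_{j,j} = |z_{\ell+j}|^2$ are nonnegative real numbers for any choice of $\bs{z}\in V_A$. Thus $X$ is contained in the semialgebraic set $W$ defined by the relations of Proposition~\ref{prop:ConstrucRelations} together with these inequalities. Next I would prove the reverse containment $W\subseteq X$, which is the substantive step. Given a point of $\overline{X}$ satisfying all the relations and the inequalities $r_i\geq 0$, $p_{j,j}\geq 0$, I would reconstruct an explicit preimage $\bs{z}\in V_A$ under $\phi$. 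Concretely, one sets $z_i := \sqrt{r_i}$ for $i\leq\ell$ (using $r_i\geq 0$) and chooses the moduli and phases of the $z_{\ell+j}$ so that $|z_{\ell+j}|^2 = p_{j,j}$ and the off-diagonal products $z_{\ell+i}\overline{z_{\ell+j}}$ match the prescribed values of $p_{i,j}$; the rank-one relation in Proposition~\ref{prop:ConstrucRelations}(1) guarantees that the Hermitian matrix $(p_{i,j})$ has rank at most one, so such phases exist precisely when the $p_{j,j}\geq 0$. Finally the relations in (2)--(6) fix the phases of the high-degree invariants $q_{\bs{s}}$ consistently with these choices.

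The main obstacle I anticipate is the phase-matching argument in the reconstruction: one must show that the constraints imposed by relations (2)--(6) relating the $q_{\bs{s}}$, $\overline{q_{\bs{s}}}$, $r_i$, and $p_{i,j}$ admit a simultaneous solution for the arguments of the complex numbers $z_i$, and that this can be done compatibly whenever the diagonal invariants are nonnegative and the polynomial relations hold. This is precisely the content of the Procesi--Schwarz criterion, which identifies the inequalities as those arising from the positivity of the matrix of Hermitian inner products of the gradient vectors of a Hilbert basis; here that matrix is, up to normalization, governed exactly by the $r_i$ and $p_{j,j}$. I would therefore invoke \cite{ProcesiSchwarz} to reduce the problem to checking that the semialgebraic set is cut out by the principal minors of this Gram-type matrix, and then observe that, because of the rank-one structure forced by relations (1)--(3), all such minors reduce to (products of) the diagonal entries $r_i$ and $p_{j,j}$. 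Since the passage from off-shell to on-shell only adds the linear moment-map relations $-a_i r_i + n_i\sum_j p_{j,j} = 0$ and does not alter the inequalities (as noted in the Introduction, the inequalities cutting out $Y$ from $\overline{Y}$ coincide with those cutting out $X$ from $\overline{X}$), the same list of inequalities describes the image of the shell, completing the proof.
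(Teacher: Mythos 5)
Your two-containment skeleton (easy direction plus explicit reconstruction of a preimage) matches the paper's, but the substantive step --- the reconstruction --- has a genuine gap exactly where you flag ``the main obstacle,'' and the appeal to Procesi--Schwarz does not close it. In your construction you fix the phases of the $z_{\ell+j}$ from the off-diagonal $p_{i,j}$ (fine, by the rank-one Hermitian structure), and then claim that relations (2)--(6) ``fix the phases of the $q_{\bs{s}}$ consistently with these choices.'' This is backwards: the values $q_{\bs{s}}$ are part of the given data that your point $\bs{z}$ must hit, and they are \emph{not} determined by the $p_{i,j}$ --- the $p_{i,j}$ determine the phases of the $z_{\ell+j}$ only up to a common rotation, and with your choice $z_i = \sqrt{r_i}$ (phase $0$ for $i \le \ell$) the computed $q_{\bs{s}}$ carry an overall phase that remains free. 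One must use this residual rotation to match the given argument of one $q_{\bs{s}}$ and then verify, via relations (2)--(6), that all the other given $q_{\bs{s'}}$ and $\overline{q_{\bs{s'}}}$ are matched simultaneously. The paper does precisely this, but in the opposite order: it takes the arguments of the $q_{\bs{s}}$ with a single nonzero coordinate as the primary data, chooses the argument of each $z_{\ell+i}$ from $q_{(0,\ldots,0,\alpha,0,\ldots,0)}$, and then checks that the relations force the $p_{i,j}$ and the remaining $q_{\bs{s'}}$ to take the prescribed values. Without some such argument your construction only produces a point whose $r$'s and $p$'s agree with the given data; agreement of the $q$'s is unproven.

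The pivot to Procesi--Schwarz in your last paragraph does not repair this, because you use it as a black box for a claim that is essentially the proposition itself. Procesi--Schwarz cuts out the image, inside the variety of relations, by positive semidefiniteness of the Gram matrix of gradients of the \emph{entire} Hilbert basis --- including the $2\binom{\alpha+k-1}{k-1}$ degree-$\eta$ generators $q_{\bs{s}}$, $\overline{q_{\bs{s}}}$, whose gradient inner products are complicated polynomials in all the invariants (compare the bracket $\{q_{\bs{s}},\overline{q_{\bs{s'}}}\}$ in Proposition~\ref{prop:ConstrucPoisson}). Your assertion that ``all such minors reduce to (products of) the diagonal entries $r_i$ and $p_{j,j}$'' modulo the relations is exactly the nontrivial content that would have to be proved; the rank-one structure of $(p_{i,j})$ alone does not give it, since the rows and columns indexed by the $q_{\bs{s}}$ do not even appear in that matrix. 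So either route --- direct reconstruction or Procesi--Schwarz reduction --- still requires the missing computation; as written, the proposal establishes only the easy containment.
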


From the definition of the monomials, it is easy to see that these inequalities are satisfied.
For the converse, choose values of the $r_i$, $p_{i,j}$, and $q_{\bs{s}}$ such that each $r_i \geq 0$,
each $p_{i,i} \geq 0$, and the remaining values are arbitrary elements of $\C$ such that the each
$p_{i,j} = \overline{p_{j,i}}$ and relations in Proposition~\ref{prop:ConstrucRelations} are satisfied.
It is then easy to see that the values $|r_i|$, $|p_{i,j}|$ for $i\neq j$, and $|q_{\bs{s}}|$ are determined
by the $p_{i,i}$.  Specifically, using the relations of Proposition~\ref{prop:ConstrucRelations}(1), we have
\[
    |p_{i,j}|   =   \sqrt{p_{i,i} p_{j,j}},
\]
using the moment map, we have
\[
    |r_i|       =   \frac{n_i}{a_i} \sum\limits_{j=1}^k p_{j,j}
\]
and using the relations of Proposition~\ref{prop:ConstrucRelations}(6), we have
\[
    q_{\bs{s}}  =   \sqrt{\prod\limits_{i=1}^\ell \left(\frac{n_i}{a_i}\right)^{m_i}
                        \left(\sum\limits_{j=1}^k p_{i,i}\right)^{\sum_{i=1}^\ell m_i} }
                        \left(\prod\limits_{j=1}^k p_{i,i}^{s_i}\right)^{\alpha/2}.
\]
Similarly, using the relations of Proposition~\ref{prop:ConstrucRelations}(3), one checks that the
arguments of the $q_{\bs{s}}$ where $\bs{s}$ has only one nonzero coordinate (which must be equal to $\alpha$)
determine the arguments of the $p_{i,j}$ and the other $q_{\bs{s^\prime}}$. It follows that one can find a
point $(z_1,\ldots,z_n)$ mapped via the Hilbert embedding to these values of $r_i$, $p_{i,j}$, and $q_{\bs{s}}$
by choosing the modulus of each $z_{\ell+i}$ to be $\sqrt{p_{i,i}}$, the modulus of each
$z_i$ for $i\leq\ell$ to be determined by the moment map, the argument of each $z_i$ for $i\leq\ell$ to be $0$,
and the argument of each $z_{\ell+i}$ to be the argument of $q_{(0,\ldots,0,\alpha,0,\ldots,0)}$ where $\alpha$
occurs in the $i$th position.

With this explicit description of $M_0(A)$ and $\R[M_0(A)]$ the following can be verified by explicit computation.

\begin{theorem}
\label{thrm:Construc}
Let $A\in\Z^{\ell\times (\ell+k)}$ be a Type I$_k$ matrix such that $V_A$ is a faithful $\T^\ell$-module,
and let $B = \big(-\alpha(A), c_1\beta(A),\ldots, c_k\beta(A)\big) \in\Z^{1\times(k+1)}$.
Using coordinates $(w_1,\ldots,w_{k+1})$ for $V_B$, define the map
$\Phi\co\C[V_A]^{\T^\ell} \to \C[V_B]$ by
\begin{align*}
    r_i             &\longmapsto    \frac{m_i(A)}{\beta(A)} w_1 \overline{w_1},  \quad\quad\quad 1\leq i \leq \ell,
    \\
    p_{ij}          &\longmapsto    w_{i+1}\overline{w_{i+1}},              \quad\quad\quad 1\leq i,j \leq k,
    \\
    q_{\bs{s}}      &\longmapsto    \sqrt{\beta(A)^{-\beta(A)} \prod\limits_{j=1}^\ell m_j(A)^{m_j(A)}}
                                    \quad w_1^{\beta(A)} \prod\limits_{j=1}^k w_{j+1}^{s_j},
    \\
    \overline{q_{\bs{s}}}
                    &\longmapsto    \sqrt{\beta(A)^{-\beta(A)} \prod\limits_{j=1}^\ell m_j(A)^{m_j(A)}}
                                    \quad \overline{w_1}^{\beta(A)}
                                    \prod\limits_{j=1}^k \overline{w_{j+1}}^{s_j}.
\end{align*}
Then $\Phi$ is a well-defined homomorphism $\Phi\co\C[V_A]^{\T^\ell} \to \C[V_B]^{\T^1}$
inducing an isomorphism $\R[M_0(A)]\to\R[M_0(B)]$ and a graded regular symplectomorphism between
$M_0(A)$ and $M_0(B)$.
\end{theorem}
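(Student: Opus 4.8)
The plan is to verify directly that the assignment $\Phi$ on the Hilbert basis of Proposition~\ref{prop:ConstrucHBasis} extends to a well-defined algebra homomorphism, descends to the two symplectic quotients, and is a graded Poisson isomorphism carrying the inequalities of Proposition~\ref{prop:ConstrucInequalities} to those cutting out $M_0(B)$; a closing appeal to the Lifting Theorem of \cite{FarHerSea} then produces the symplectomorphism. Throughout I use that $A$ Type I$_k$ forces each $c_r=1$, so $B=(-\alpha,\beta,\ldots,\beta)$, and the identities $\sum_i m_i=\beta$, $\sum_i s_i=\alpha$, and $a_i m_i=n_i\alpha$.

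First I would show $\Phi$ is well-defined as a map $\C[V_A]^{\T^\ell}\to\C[V_B]$ by checking that it annihilates each generator of the off-shell ideal listed in Proposition~\ref{prop:ConstrucRelations}. The monomial relations (1)--(5) follow at once from the multiplicative form of the images. The decisive one is (6): one computes $\Phi\big(\prod_i r_i^{m_i}\prod_j p_{g_j,h_j}\big)=\beta^{-\beta}\big(\prod_i m_i^{m_i}\big)(w_1\overline{w_1})^{\beta}\prod_g w_{g+1}^{s_g}\prod_h\overline{w_{h+1}}^{s_h'}$, which agrees with $\Phi(q_{\bs{s}}\,\overline{q_{\bs{s'}}})$ \emph{precisely because} the normalizing constant is $\sqrt{\beta^{-\beta}\prod_j m_j^{m_j}}$; this forces the choice of constant. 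A weight count (each image has total $\T^1$-weight zero, using $\sum_j s_j=\alpha$) shows the image lands in $\C[V_B]^{\T^1}$. Next, $\Phi\big(-a_i r_i+n_i\sum_j p_{jj}\big)=\tfrac{n_i}{\beta}\big(-\alpha w_1\overline{w_1}+\beta\sum_j w_{j+1}\overline{w_{j+1}}\big)$ lies in the ideal generated by $J_B$, so $\Phi$ descends to $\R[M_0(A)]\to\R[M_0(B)]$.

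To see the descended map is an isomorphism, I would first establish surjectivity: since $\gcd(\alpha,\beta)=1$ by Lemma~\ref{lem:GCD(AB)Effective} (with each $c_r=1$), the algebra $\C[V_B]^{\T^1}$ is generated by $w_1\overline{w_1}$, the $w_{i+1}\overline{w_{j+1}}$, and the primitive mixed invariants $w_1^{\beta}\prod_j w_{j+1}^{s_j}$ with $\sum_j s_j=\alpha$ together with their conjugates, all of which are scalar multiples of images under $\Phi$. Injectivity I would obtain by writing down the inverse homomorphism on generators (the algebraic shadow of the inverse linear map recorded at the end of Lemma~\ref{lem:Semialg}) and checking the two composites are the identity modulo the respective moment-map ideals. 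Grading is a one-line degree count: $r_i,p_{ij}$ map to degree-$2$ elements and $q_{\bs{s}}$ to an element of degree $\beta+\sum_j s_j=\beta+\alpha=\eta$.

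The remaining content is to confirm $\Phi$ is Poisson by matching the structure constants of Proposition~\ref{prop:ConstrucPoisson} against the brackets of the images computed from $\{w_i,\overline{w_j}\}=-2\sqrt{-1}\,\delta_{ij}$; the identity $-\tfrac{2}{\sqrt{-1}}=2\sqrt{-1}$ reconciles the conventions, so that, for instance, $\{\Phi(r_i),\Phi(q_{\bs{s}})\}=2\sqrt{-1}\,m_i\,\Phi(q_{\bs{s}})=\Phi(\{r_i,q_{\bs{s}}\})$. I expect the principal obstacle to be the final bracket $\{q_{\bs{s}},\overline{q_{\bs{s'}}}\}$, whose image carries the sums $\sum_i m_i^2/r_i+\sum_j s_j s_j'/p_{jj}$; verifying it requires using that the $r_i$ and $p_{jj}$ divide $q_{\bs{s}}\overline{q_{\bs{s'}}}$ and translating those divisions carefully under $\Phi$ (this is where the earlier choice of constant again pays off). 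Finally, since $\Phi$ sends $r_i\mapsto\tfrac{m_i}{\beta}|w_1|^2\geq 0$ and $p_{jj}\mapsto|w_{j+1}|^2\geq 0$, it carries the inequalities of Proposition~\ref{prop:ConstrucInequalities} to those defining $M_0(B)$, so the graded Poisson isomorphism is compatible with the semialgebraic structure; the Lifting Theorem of \cite{FarHerSea} then upgrades it to a graded regular symplectomorphism between $M_0(A)$ and $M_0(B)$.
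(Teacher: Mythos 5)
Your proposal is correct and follows essentially the same route as the paper: the paper's own ``proof'' of Theorem~\ref{thrm:Construc} is precisely the one-line assertion that the claim ``can be verified by explicit computation'' from Propositions~\ref{prop:ConstrucHBasis}--\ref{prop:ConstrucInequalities}, and your outline (annihilating the relations of Proposition~\ref{prop:ConstrucRelations}, descending via the moment map, matching the brackets of Proposition~\ref{prop:ConstrucPoisson}, checking the inequalities of Proposition~\ref{prop:ConstrucInequalities}, and invoking the Lifting Theorem of \cite{FarHerSea}) is exactly that computation, carried out correctly --- including the implicit correction of the paper's typo $p_{ij}\mapsto w_{i+1}\overline{w_{i+1}}$ to $p_{ij}\mapsto w_{i+1}\overline{w_{j+1}}$.
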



\section{Classification for Type I$_k$ matrices}
\label{sec:Classification}

In the case $k = 1$, Corollary~\ref{cor:SIGMASympCrit} implies that two weight matrices $A_1$ and $A_2$
yield graded regularly symplectomorphic symplectic quotients if and only if $\eta(A_1) = \eta(A_2)$,
i.e. if and only if $\alpha(A_1)+\beta(A_1) = \alpha(A_2)+\beta(A_2)$. For $k > 1$, this
is no longer the case, as we demonstrate with the following.

\begin{lemma}
\label{lem:TypeINoOthers}
Let $A = (-\alpha, \overbrace{\beta,\ldots,\beta}^k)$
and $B = (-\alpha^\prime, \overbrace{\beta^\prime,\ldots,\beta^\prime}^{k^\prime})$
such that $V_A$ and $V_B$ are faithful $\T^1$-modules. If the symplectic quotients
$M_0(A)$ and $M_0(B)$ are graded regularly diffeomorphic for $k\geq 2$, then
$k = k^\prime$, $\alpha = \alpha^\prime$ and $\beta = \beta^\prime$.
\end{lemma}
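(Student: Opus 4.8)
The plan is to extract from the graded regular diffeomorphism a collection of discrete invariants of the symplectic quotient $M_0(A)$ that can be read off and compared. A graded regular diffeomorphism $F\co M_0(A)\to M_0(B)$ induces a graded isomorphism $F^\ast\co\R[M_0(B)]\to\R[M_0(A)]$ of the algebras of real regular functions. Since both $A$ and $B$ are Type I (so each $c_r=1$), I can invoke the explicit description of $\R[M_0]$ from Section~\ref{sec:ConstrucProof}: a generating set is given by the $p_{i,j}$, the $q_{\bs{s}}$, and the $\overline{q_{\bs{s}}}$ (the $r_i$ having been eliminated by the on-shell relations), with $\deg p_{i,j}=2$ and $\deg q_{\bs{s}}=\deg\overline{q_{\bs{s}}}=\eta$. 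The first step is therefore to identify $k$, $\alpha$, and $\beta$ intrinsically from the graded algebra structure.

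The cleanest invariant is the Hilbert series (or equivalently the dimensions of the graded pieces), since a graded isomorphism forces $\dim_\R\R[M_0(A)]_d=\dim_\R\R[M_0(B)]_d$ for every degree $d$. First I would recover $k$: the degree-$2$ part of $\R[M_0]$ is spanned by the real and imaginary parts of the $p_{i,j}$ for $1\le i,j\le k$, so $\dim_\R\R[M_0]_2=k^2$ (one must check, using Proposition~\ref{prop:ConstrucRelations}, that there are no degree-$2$ relations, which is clear since the only relations of degree $\le 2$ among the $p_{i,j}$ are the quadratic Plücker relations of degree $4$). Hence $k=k^\prime$ is immediate. Recovering $\eta$ is the next step: since $\eta$ is the degree of the lowest-degree generators $q_{\bs{s}}$ not already accounted for by products of the $p_{i,j}$, one can detect $\eta$ as the smallest degree $d>2$ at which $\dim_\R\R[M_0]_d$ exceeds the dimension contributed by the subalgebra generated by the $p_{i,j}$ alone. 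With $k$ and $\eta$ both determined and $\eta=\alpha+\beta$, it remains only to separate $\alpha$ from $\beta$.

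To split $\eta=\alpha+\beta$ into its two summands, I would use the Poisson bracket, which $F^\ast$ also preserves, together with the bracket formulas in Proposition~\ref{prop:ConstrucPoisson}. The key relation is $\{q_{\bs{s}},\overline{q_{\bs{s^\prime}}}\}=\frac{2}{\sqrt{-1}}q_{\bs{s}}\overline{q_{\bs{s^\prime}}}\big(\sum_i m_i^2/r_i+\sum_j s_j s_j^\prime/p_{j,j}\big)$; after clearing denominators and using the on-shell relation $r_i=(n_i/a_i)\sum_j p_{j,j}$, the coefficient $\sum_i m_i^2/r_i$ packages $\alpha$ and $\beta$ through $\sum_i m_i=\beta$ and the relation $a_i m_i=n_i\alpha$. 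Comparing the lowest-order behaviour of these brackets on the two quotients should pin down $\beta$ (and hence $\alpha=\eta-\beta$) as a Poisson-invariant quantity. I expect this final separation to be the main obstacle, because the brackets are only available after the diffeomorphism is known to be a Poisson map and the algebraic manipulation of the $\sum m_i^2/r_i$ term into a clean numerical invariant requires care; the degree-counting steps recovering $k$ and $\eta$ are comparatively routine.
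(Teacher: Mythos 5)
Your first two steps are sound and essentially parallel the paper's own proof: the paper recovers $k$ from the Krull dimension of $\R[M_0]$ (which is $2k$), while you recover it from $\dim_\R\R[M_0]_2 = k^2$ — both work, though your count fails in the degenerate case $\alpha=\beta=1$ (forced by $\gcd(\alpha,\beta)=1$ whenever $\alpha=\beta$), where the degree-$\eta$ generators are themselves quadratic and the degree-two piece has dimension $k^2+2k$. Likewise, recovering $\eta=\alpha+\beta$ as the first degree in which the algebra strictly exceeds the subalgebra generated by its degree-two part is exactly the paper's second step (with its subalgebra $\mathcal{Q}(A)$).

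The genuine gap is in your final step, which is where all the content of the lemma lies. The hypothesis is only that $M_0(A)$ and $M_0(B)$ are graded regularly \emph{diffeomorphic}; in this paper's terminology that yields a graded isomorphism of $\R$-algebras $\R[M_0(A)]\to\R[M_0(B)]$ and nothing more. Your assertion that ``the Poisson bracket, which $F^\ast$ also preserves'' is precisely what is \emph{not} assumed — and it cannot be assumed, since the point of Corollary~\ref{cor:ClassifyTypeI} is that graded regular diffeomorphism classes and graded regular symplectomorphism classes coincide, a conclusion that would be emptied of content if the proof of the lemma required the map to be Poisson. You acknowledge this tension at the end but do not resolve it, and even granting a Poisson map, the proposed extraction of $\beta$ from $\{q_{\bs{s}},\overline{q_{\bs{s^\prime}}}\}$ is a hope rather than an argument. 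The paper closes the gap with a purely combinatorial count requiring no Poisson structure: the number of degree-$\eta$ monomials lying outside $\mathcal{Q}(A)$ is $\binom{\alpha+k-1}{k-1}$ (the number of choices of $\bs{s}$ with $\sum_{i=1}^k s_i=\alpha$), so a graded isomorphism forces $\binom{\alpha+k-1}{k-1}=\binom{\alpha^\prime+k-1}{k-1}$; since this quantity is strictly increasing in $\alpha$ once $k-1\geq 1$, it pins down $\alpha=\alpha^\prime$ and hence $\beta=\beta^\prime$. Note that this is exactly where the hypothesis $k\geq 2$ enters — for $k=1$ the count is identically $1$ and the lemma is false, as only $\eta$ matters by Corollary~\ref{cor:SIGMASympCrit} — whereas your final step never invokes $k\geq 2$ at all, which is a structural sign that it cannot be correct as stated.
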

\begin{proof}
First note that the fact that $V_A$ and $V_B$ are faithful implies that
$\gcd(\alpha,\beta) = \gcd(\alpha^\prime,\beta^\prime) = 1$.
The existence of a graded regular diffeomorphism implies that $\R[M_0(A)]$ is graded isomorphic
to $\R[M_0(B)]$. As the Krull dimensions of $\R[M_0(A)]$ and $\R[M_0(B)]$ are given by $2k$ and
$2k^\prime$, respectively, it follows that $k = k^\prime$.

Let $\mathcal{Q}(A)$ denote the subalgebra of $\R[M_0(A)]$ that is generated by the quadratic
monomials of the form $z_i \overline{z_i} + I_{Z_A}^G$ for $i=1\,\ldots,k+1$
and $z_{1+i} \overline{z_{1+j}}+ I_{Z_A}^G$ for $1\leq i,j \leq k$, and define
$\mathcal{Q}(B)$ identically as a subalgebra of $\R[M_0(B)]$.
Note that $\mathcal{Q}(A)$ and $\mathcal{Q}(B)$ are obviously graded isomorphic.
The lowest-degree element of $\R[M_0(A)]$ that is not an element of $\mathcal{Q}(A)$
has degree $\alpha+\beta$, and similarly for $\R[M_0(B)]$, so we can conclude that
$\alpha+\beta = \alpha^\prime+\beta^\prime$.
Finally, the number of monomials in $\R[M_0(A)]$ of degree $\alpha+\beta$ that are
not elements of $\mathcal{Q}(A)$ is ${\alpha+k-1\choose k-1}$, and hence
${\alpha+k-1\choose k-1}={\alpha^\prime+k-1\choose k-1}$, i.e.
$(\alpha+k-1)!/\alpha! = (\alpha^\prime+k-1)!/\alpha^\prime!$. As $k > 1$,
it follows that $\alpha = \alpha^\prime$, and hence $\beta = \beta^\prime$.
\end{proof}

\begin{corollary}
\label{cor:ClassifyTypeI}
The graded regular symplectomorphism classes of symplectic quotients associated to Type I$_k$
weight matrices with $k > 1$ are classified by the triple $(k,\alpha(A),\beta(A))$. Moreover, these
graded regular symplectomorphism classes coincide with the graded regular diffeomorphism classes.
\end{corollary}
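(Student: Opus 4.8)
My plan is to reduce the classification to the circle case of Lemma~\ref{lem:TypeINoOthers} using the dimension-reduction of Theorem~\ref{thrm:MainSymplectomorph}. Given a Type I$_k$ matrix $A$, Theorem~\ref{thrm:MainSymplectomorph} (taking every $c_r = 1$) supplies a graded regular symplectomorphism between $M_0(A)$ and $M_0(B)$, where $B = (-\alpha(A), \beta(A), \ldots, \beta(A)) \in \Z^{1 \times (k+1)}$ is the $\T^1$-weight matrix with $k$ copies of $\beta(A)$. Since this $B$ is exactly of the form treated in Lemma~\ref{lem:TypeINoOthers}, the triple $(k, \alpha(A), \beta(A))$ becomes the list of invariants distinguishing the associated circle quotients.

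For sufficiency I would argue as follows. If two Type I$_k$ matrices $A_1$ and $A_2$ share the same triple $(k, \alpha, \beta)$, then Theorem~\ref{thrm:MainSymplectomorph} sends both to the single circle quotient $M_0(B)$ with $B = (-\alpha, \beta, \ldots, \beta)$. Composing the graded regular symplectomorphism $M_0(A_1) \to M_0(B)$ with the inverse of the one for $A_2$ then yields a graded regular symplectomorphism $M_0(A_1) \to M_0(A_2)$.

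For necessity I would start from a graded regular diffeomorphism $M_0(A_1) \to M_0(A_2)$. Each $M_0(A_i)$ is graded regularly symplectomorphic---hence in particular graded regularly diffeomorphic---to its reduced circle quotient $M_0(B_i)$, with $B_i = (-\alpha(A_i), \beta(A_i), \ldots, \beta(A_i))$, so chaining diffeomorphisms produces a graded regular diffeomorphism $M_0(B_1) \to M_0(B_2)$. Before invoking Lemma~\ref{lem:TypeINoOthers} I must verify its faithfulness hypothesis for the reduced weights: since $V_{A_i}$ is faithful, Lemma~\ref{lem:GCD(AB)Effective} (with all $c_r = 1$) gives $\gcd(\alpha(A_i), \beta(A_i)) = 1$, which is precisely the condition for $V_{B_i}$ to be a faithful $\T^1$-module. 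With $k > 1$ assumed, Lemma~\ref{lem:TypeINoOthers} then forces $k_1 = k_2$, $\alpha(A_1) = \alpha(A_2)$, and $\beta(A_1) = \beta(A_2)$.

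The coincidence of the two classifications then drops out of combining the directions: a graded regular symplectomorphism is a fortiori a graded regular diffeomorphism, so symplectomorphic quotients are diffeomorphic; conversely the necessity step shows that a graded regular diffeomorphism already pins down the triple $(k, \alpha, \beta)$, and sufficiency then upgrades it to a graded regular symplectomorphism. I expect the only genuinely delicate point to be the faithfulness check for the reduced circle weights, which is exactly where Lemma~\ref{lem:GCD(AB)Effective} is needed; the remainder is the formal bookkeeping of composing the symplectomorphisms supplied by Theorem~\ref{thrm:MainSymplectomorph}.
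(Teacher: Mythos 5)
Your proposal is correct and is exactly the argument the paper leaves implicit: Theorem~\ref{thrm:MainSymplectomorph} (with each $c_r=1$) reduces every Type I$_k$ quotient to the circle quotient for $B=(-\alpha,\beta,\ldots,\beta)$, and Lemma~\ref{lem:TypeINoOthers} then pins down the triple from a graded regular diffeomorphism, with Lemma~\ref{lem:GCD(AB)Effective} supplying the faithfulness of $V_B$ needed to invoke it. Your handling of the "moreover" clause (symplectomorphism $\Rightarrow$ diffeomorphism $\Rightarrow$ equal triples $\Rightarrow$ symplectomorphism) matches the intended reading of the corollary.
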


It is not clear whether an analog to Lemma~\ref{lem:TypeINoOthers} is true for
Type II$_k$ matrices, but a proof using only the grading of $\R[M_0]$ as in
Lemma~\ref{lem:TypeINoOthers} is not possible.
First note that such a generalization would require restricting to
specific representatives, e.g. requiring that $\gcd(c_1,\ldots,c_k) = 1$.
Otherwise, it is possible that a $1\times (k+1)$ Type II$_k$ matrix could be written in terms of
$\alpha$, $\beta$, and the $c_i$ in more than one way, e.g. $(-1, 4, 12)$ could correspond to
$\alpha = 1$, $\beta = 2$, $c_1 = 2$, and $c_2 = 6$ or to $\alpha = 1$, $\beta = 4$, $c_1 = 1$, and $c_2 = 3$.
However, even with such a restriction, it is possible that $\R[M_0(A)]$ and $\R[M_0(B)]$ have the same Hilbert
series yet fail to be graded regularly symplectomorphic. We will illustrate this in the next section.


\section{The Hilbert series does not classify symplectic quotients by tori}
\label{sec:HilbCounterex}

The graded regular symplectomorphisms given by Theorem~\ref{thrm:MainSymplectomorph} were initially discovered
by computing Hilbert series of the algebras of regular functions on symplectic quotients
associated to large classes of weight matrices and looking for cases that coincide. While the Hilbert
series has been a valuable heuristic to indicate potential graded regular symplectomorphisms
and an important tool to distinguish between non-graded regularly symplectomorphic cases,
one would likely guess that there are cases with the same Hilbert series that are not graded
regularly symplectomorphic. In this section, we give examples to indicate that this is the case:
the Hilbert series is not a fine enough invariant to distinguish graded regular symplectomorphism
classes of symplectic quotients by tori. These examples further illustrate that two symplectic
quotients can have several isomorphic structures yet fail to be graded regularly symplectomorphic.

Let $A = (-2,3,6)$ and $B = (-3,2,6)$. Note that these are both Type II$_2$ weight matrices;
$A$ corresponding to $\alpha = 2$, $\beta = 3$, $c_1 = 1$, and $c_2 = 2$; and
$B$ corresponding to $\alpha = 3$, $\beta = 2$, $c_1 = 1$, and $c_2 = 3$).
Because the Hilbert series of symplectic quotients by
$\T^1$ only depends on the sign of the weights (see \cite[page 47]{HerbigSeatonHilbSympCirc}),
it is clear that the Hilbert series of $\R[M_0(A)]$ and $\R[M_0(B)]$ coincide. In particular,
they are both given by
\[
    \frac{1 + t^3 + 2t^4 + t^5 + t^8}
        {(1 - t^5)(1 - t^3)(1 - t^2)^3}.
\]
The off-shell invariants $\R[V_A]^{\T^1}$ are generated by
\begin{align*}
    p_0 = &z_1 \overline{z_1},\quad
    p_1 = z_2 \overline{z_2},\quad
    p_2 = z_3 \overline{z_3},\quad
    p_3 = z_2^2 \overline{z_3},\quad
    p_4 = z_3 \overline{z_2}^2,
    \\
    &p_5 = z_1^3 z_3,\quad
    p_6 = \overline{z_1}^3 \overline{z_3},\quad
    p_7 = z_1^3 z_2^2,\quad
    p_8 = \overline{z_1}^3 \overline{z_2}^2 ,
\end{align*}
and the moment map determines $p_0$ via $2p_0 = 3p_1 + 6p_2$.
The off-shell invariants $\R[V_B]^{\T^1}$ are generated by
\begin{align*}
    q_0 = &u_1 \overline{u_1},\quad
    q_1 = u_2 \overline{u_2},\quad
    q_2 = u_3 \overline{u_3},\quad
    q_3 = u_1^2 u_3,\quad
    q_4 = \overline{u_1}^2 \overline{u_3},
    \\
    &q_5 = u_2^3 \overline{u_3},\quad
    q_6 = u_3 \overline{u_2}^3,\quad
    q_7 = u_1^2 u_2^3,\quad
    q_8 = \overline{u_1}^2 \overline{u_2}^3,
\end{align*}
and the shell relation is given by $3q_0 = 2q_1 + 6q_2$.

\begin{proposition}
\label{prop:HilbCounter}
For the weight matrices $A = (-2,3,6)$ and $B = (-3,2,6)$, the following hold true.
\begin{itemize}
\item[({\it i.})]       The algebras $\R[M_0(A)]\otimes_{\R}\C$ and $\R[M_0(B)]\otimes_{\R}\C$
                        are graded Poisson isomorphic. Hence, the complex symplectic quotients
                        are isomorphic as Poisson varieties.
\item[({\it ii.})]      The algebras $\R[M_0(A)]$ and $\R[M_0(B)]$ are graded isomorphic.
                        However, no graded isomorphism $\R[M_0(A)]\to\R[M_0(B)]$ preserves the
                        inequalities describing the semialgebraic sets $M_0(A)$ and $M_0(B)$.
\end{itemize}
\end{proposition}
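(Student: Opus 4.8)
The plan is to establish part (i) by explicitly constructing a graded Poisson isomorphism between the complexified algebras, and then to establish part (ii) by first exhibiting a graded (non-Poisson) isomorphism over $\R$, and finally by analyzing the inequality structure to show no such isomorphism can respect the semialgebraic conditions. The key observation driving both parts is that the generators of $\R[V_A]^{\T^1}$ and $\R[V_B]^{\T^1}$ listed above have a nearly symmetric structure: both algebras have three quadratic generators ($p_0,p_1,p_2$ versus $q_0,q_1,q_2$) and six higher-degree generators, and the degrees match up after a suitable relabeling. Concretely, the degrees in $M_0(A)$ are $p_1,p_2$ of degree $2$, $p_3,p_4$ of degree $3$, $p_5,p_6$ of degree $5$, $p_7,p_8$ of degree $4$; and in $M_0(B)$ the degrees are $q_1,q_2$ of degree $2$, $q_3,q_4$ of degree $3$, $q_5,q_6$ of degree $5$, $q_7,q_8$ of degree $4$. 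This degree matching is what makes a graded isomorphism possible at all.

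\textbf{Part (i).} First I would write down a candidate correspondence on generators that matches degrees and respects the on-shell moment map relations $2p_0 = 3p_1 + 6p_2$ and $3q_0 = 2q_1 + 6q_2$. The natural attempt swaps the roles of the two weights: send $p_1\mapsto q_2$, $p_2 \mapsto q_1$ (or some rescaling thereof), and correspondingly match the cubic, quartic, and quintic generators. I would then verify that this assignment (i) is compatible with the on-shell relations, (ii) sends the full ideal of relations among the $p_i$ to the ideal of relations among the $q_i$, and (iii) intertwines the Poisson brackets, which can be computed explicitly on generators using $\{z_i,\overline{z_j}\} = -2\sqrt{-1}\,\delta_{ij}$ as recorded in the Introduction. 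Over $\C$ there is extra freedom to introduce scalar factors (including factors of $\sqrt{-1}$ or other nonreal constants) to force the Poisson brackets to match, which is precisely why the complexified algebras can be Poisson isomorphic even when the real ones cannot be made Poisson isomorphic; this flexibility is the content of (i).

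\textbf{Part (ii).} For the first assertion, I would restrict the scalars used in part (i) to real ones and check that a purely real graded algebra isomorphism (ignoring the Poisson structure) still exists — here the square-root constants appearing in the generator correspondence can be absorbed into real rescalings of generators, so a graded $\R$-algebra isomorphism survives even though the Poisson-compatible version required complex scalars. The harder and more interesting assertion is the impossibility of preserving the inequalities. The inequalities cutting out $M_0(A)$ are $p_0,p_1,p_2 \geq 0$ (equivalently, the moduli-squared are nonnegative), and likewise $q_0,q_1,q_2\geq 0$ for $M_0(B)$; these single out a distinguished positive cone spanned by the quadratic generators.

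\textbf{The hard part} will be showing that no graded algebra isomorphism can carry the inequality cone of $M_0(A)$ onto that of $M_0(B)$. My strategy is to pin down exactly which degree-$2$ elements of each algebra are forced to be nonnegative and to extract a numerical invariant from the interaction between this positive cone and the higher-degree structure that distinguishes $A$ from $B$. Specifically, the relation of type (6) from Proposition~\ref{prop:ConstrucRelations} (the relation expressing a product of quadratics as $q_{\bs{s}}\overline{q_{\bs{s}}}$) ties the positivity of certain quadratics to the moduli of the higher generators, and the \emph{exponents} appearing there differ between the two matrices because $\alpha$ and $\beta$ are interchanged. I would argue that any graded isomorphism must send the span of nonnegative quadratics of $M_0(A)$ to that of $M_0(B)$ (since these are intrinsically characterized, e.g. as the real points where all elements of a certain degree-$2$ subspace are nonnegative and the defining factorization relations hold), and then derive a contradiction by comparing how the quintic generators $p_5,p_6$ (degree $5$, built from $z_1^3$) factor through the quadratic cone versus how $q_5,q_6$ (degree $5$, built from $u_2^3$) do. The mismatch in which quadratic variable carries the cube is the obstruction: in $M_0(A)$ the weight-$(-2)$ coordinate $z_1$ is cubed in the degree-$5$ generators while the weight-$3$ coordinate $z_2$ is squared in the degree-$4$ ones, whereas in $M_0(B)$ these roles are reversed, and this asymmetry cannot be reconciled with the fixed positivity constraints $p_i,q_i\geq 0$. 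Making this obstruction precise — identifying a coordinate-free, grading- and inequality-invariant quantity that separates the two cases — is where the real work lies.
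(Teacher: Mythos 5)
Your overall architecture (proof by contradiction from positivity of the quadratic generators, with the hard step being rigidity of graded isomorphisms) matches the paper's, but both halves of your proposal stop short of a proof, and the candidate map you do write down is off. For part ({\it i.}), your suggested swap $p_1\mapsto q_2$, $p_2\mapsto q_1$ does not match the weights: since $A=(-2,3,6)$ and $B=(-3,2,6)$, the correct correspondence pairs the weight-$(-2)$ coordinate $z_1$ with the weight-$2$ coordinate $u_2$ and the weight-$3$ coordinate $z_2$ with the weight-$(-3)$ coordinate $u_1$, i.e.\ $p_0\leftrightarrow q_1$, $p_1\leftrightarrow q_0$, $p_2\leftrightarrow q_2$ (with $p_0,q_0$ then eliminated on shell). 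More importantly, flipping the sign of a weight is implemented by exchanging a holomorphic coordinate with its conjugate coordinate --- a map that only exists after complexification --- and the paper's proof of ({\it i.}) consists of exactly this: the explicit linear map $\phi(z_1,z_2,z_3,w_1,w_2,w_3)=(\sqrt{-1}\,w_2,\sqrt{-1}\,w_1,z_3,\sqrt{-1}\,z_2,\sqrt{-1}\,z_1,w_3)$ from $V_A\otimes_\R\C$ to $V_B\otimes_\R\C$, checked to be equivariant and symplectic (the factors $\sqrt{-1}$ exist precisely to make it symplectic). This upstairs construction is what lets one avoid verifying anything against the full ideal of invariant relations; your plan of defining a map only on invariant generators and then ``verifying it sends relations to relations and intertwines brackets'' presupposes knowing that ideal (the paper needs \emph{Macaulay2} to compute it even for part ({\it ii.})), and you have not produced the generator assignment that would make those verifications succeed. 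A smaller slip: $p_5,p_6,q_5,q_6$ have degree $4$ and $p_7,p_8,q_7,q_8$ have degree $5$, not the reverse.

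For part ({\it ii.}), the genuine gap is the one you yourself flag: identifying the ``coordinate-free, grading- and inequality-invariant quantity'' is not a step you can defer --- it \emph{is} the second assertion, and no candidate you sketch is shown to work. Note it cannot be any purely graded-algebraic invariant, since the first assertion of ({\it ii.}) says the graded algebras \emph{are} isomorphic; the invariant must couple the algebra structure to the positivity data, and your ``cube versus square mismatch'' is never formulated as such a quantity. The paper's resolution is not conceptual but computational: any graded isomorphism $\Phi$ must, for degree reasons, have the form $\Phi(p_1)=c_{11}q_1+c_{12}q_2$, $\Phi(p_2)=c_{21}q_1+c_{22}q_2$, $\Phi(p_3)=c_{33}q_3+c_{34}q_4$, and so on; imposing that $\Phi$ carries the computed relations $R_1,R_2$ of the subalgebra generated in degrees $\leq 4$ into the relation ideal on the $B$ side produces a polynomial system in the $c_{ij}$ whose every solution not annihilating a generator satisfies $c_{21}=0$, $c_{11}=-2c_{22}/3$, $c_{12}=-2c_{22}$. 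Whichever sign $c_{22}$ takes, one of $\Phi(p_1)=-2c_{22}(q_1/3+q_2)$ or $\Phi(p_2)=c_{22}q_2$ is then negative somewhere on $M_0(B)$, contradicting $p_1,p_2\geq 0$. Without this rigidity computation (or a genuine substitute for it), your outline does not yield the result.
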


An immediate consequence of ({\it ii.}) is that the symplectic quotients
$M_0(A)$ and $M_0(B)$ are not graded regularly symplectomorphic.

\begin{proof}[Proof of Proposition~\ref{prop:HilbCounter}({\it i.})]
As in the proof of Lemma~\ref{lem:SheshadriSection}, we complexify the underlying real vector spaces to
consider the $z_i$, $w_i:= \overline{z_i}$, $u_i$, and $v_i:= \overline{w_i}$ as independent complex variables.
Then an easy-to-identify isomorphism over $\C$ is induced by the linear map
$\phi\co V_A\otimes_\R\C \to V_B\otimes_\R\C$ given by
\[
    \phi\co (z_1, z_2, z_3, w_1, w_2, w_3)
    \longmapsto
    (\sqrt{-1} w_2, \sqrt{-1} w_1, z_3, \sqrt{-1} z_2, \sqrt{-1} z_1, w_3 ).
\]
A simple computation demonstrates that $\phi$ is equivariant with respect to the two $\C^\times$-actions,
implying that the corresponding map
$\phi^\ast\co\C[V_B\otimes_\R\C]^{\C^\times}\to\C[V_A\otimes_\R\C]^{\C^\times}$
is an isomorphism. Using coordinates $(u_1,u_2,u_3,v_1,v_2,v_3)$ for $V_B\otimes_\R\C$, we have
\begin{align*}
    \phi^\ast ( du_1\wedge dv_1 + du_2\wedge dv_2 + du_3\wedge dv_3 )
        &=      - dw_2 \wedge dz_2 - dw_1 \wedge dz_1 + dz_3 \wedge dw_3
        \\&=    dz_1 \wedge dw_1 + dz_2 \wedge dw_2 + dz_3 \wedge dw_3
\end{align*}
so that $\phi$ is a symplectic embedding.

Identifying the real and complex invariants via $w_i = \overline{z_i}$ and $v_i= \overline{w_i}$,
the map $\phi^\ast$ is given on generators by
\begin{align*}
    \phi^\ast q_0   &=  - z_2 w_2 = - p_1,
    &
    \phi^\ast q_1   &=  - z_1 w_1 = - p_0,
    \\
    \phi^\ast q_2   &=  z_3 w_3 = p_2,
    &
    \phi^\ast q_3   &=  - z_3 w_2^2 = - p_4,
    \\
    \phi^\ast q_4   &=  - z_2^2 w_3 = - p_3,
    &
    \phi^\ast q_5   &=  - \sqrt{-1} w_1^3 w_3 = - \sqrt{-1} p_6,
    \\
    \phi^\ast q_6   &=  - \sqrt{-1} z_1^3 z_3 = - \sqrt{-1} p_5,
    &
    \phi^\ast q_7   &=  \sqrt{-1} w_1^3 w_2^2 = \sqrt{-1} p_8,
    \\
    \phi^\ast q_8   &=  \sqrt{-1} z_1^3 z_2^2 = \sqrt{-1} p_7,
\end{align*}
so that $\phi^\ast J_B = J_A$. Hence $\phi^\ast$ induces an isomorphism
$\R[M_0(B)]\otimes_{\R}\C\to\R[M_0(A)]\otimes_{\R}\C$, completing the proof.
\end{proof}

Clearly, the isomorphism $\phi^\ast$ does not restrict to a map
$\R[M_0(B)]\to\R[M_0(A)]$ of the real algebras. Hence, to determine an isomorphism over
$\R$, we need a more explicit description of
$\R[M_0(A)]$ and $\R[M_0(B)]$.

\begin{proof}[Proof of Proposition~\ref{prop:HilbCounter}({\it ii.})]
Using \emph{Macaulay2} \cite{M2}, we compute the relations among the generators
$p_1, p_2,\ldots,p_8$ of $\R[M_0(A)]$ to be
\begin{gather*}
    2 p_0-3 p_1-6 p_2,\quad\quad
    p_1^2 p_2-p_4 p_3,\quad\quad
    p_4 p_6-p_2 p_8,\quad\quad
    p_3 p_5-p_2 p_7,
    \\
    p_1^2 p_6-p_3 p_8,\quad\quad
    p_1^2 p_5-p_4 p_7,\quad\quad
    324 p_1 p_2^3+216 p_2^4+27 p_1 p_4 p_3+162 p_2 p_4 p_3-8 p_5 p_6,
    \\
    27 p_1^3 p_3+324 p_1 p_2^2 p_3+216 p_2^3 p_3+162 p_4 p_3^2-8 p_6 p_7,
    \\
    27 p_1^3 p_4+324 p_1 p_2^2 p_4+216 p_2^3 p_4+162 p_4^2 p_3-8 p_5 p_8,
    \\
    432 p_2^5-81 p_1^2 p_4 p_3-432 p_1 p_2 p_4 p_3-648 p_2^2 p_4 p_3+24 p_1 p_5 p_6-16 p_2 p_5 p_6,
    \\
    27 p_1^5+162 p_1^2 p_4 p_3+324 p_1 p_2 p_4 p_3+216 p_2^2 p_4 p_3-8 p_7 p_8,
    \\
    324 p_1 p_2^2 p_3 p_6+216 p_2^3 p_3 p_6-8 p_6^2 p_7+27 p_1 p_3^2 p_8+162 p_2 p_3^2 p_8,
    \\
    324 p_1 p_2^2 p_4 p_5+216 p_2^3 p_4 p_5+27 p_1 p_4^2 p_7+162 p_2 p_4^2 p_7-8 p_5^2 p_8,
    \\
    432 p_2^4 p_3 p_6+24 p_1 p_6^2 p_7-16 p_2 p_6^2 p_7-81 p_1^2 p_3^2 p_8-432 p_1 p_2 p_3^2 p_8-648 p_2^2 p_3^2 p_8,
    \\
    432 p_2^4 p_4 p_5-81 p_1^2 p_4^2 p_7-432 p_1 p_2 p_4^2 p_7-648 p_2^2 p_4^2 p_7+24 p_1 p_5^2 p_8-16 p_2 p_5^2 p_8.
\end{gather*}
Similarly, the relations among the generators $q_1,q_2,\ldots,q_8$ of $\R[M_0(B)]$ are given by
\begin{gather*}
    3 q_0-2 q_1-6 q_2,\quad\quad
    4 q_1^2 q_2+24 q_1 q_2^2+36 q_2^3-9 q_3 q_4,\quad\quad
    q_4 q_6-q_2 q_8,\quad\quad
    q_3 q_5-q_2 q_7,
    \\
    4 q_1^2 q_6+24 q_1 q_2 q_6+36 q_2^2 q_6-9 q_3 q_8,\quad\quad
    4 q_1^2 q_5+24 q_1 q_2 q_5+36 q_2^2 q_5-9 q_4 q_7,
    \\
    108 q_1 q_2^3+216 q_2^4+9 q_1 q_3 q_4-54 q_2 q_3 q_4-4 q_5 q_6,\quad\quad
    q_1^3 q_4-q_5 q_8,\quad\quad
    q_1^3 q_3-q_6 q_7,
    \\
    108 q_2^5-9 q_1^2 q_3 q_4+18 q_1 q_2 q_3 q_4-27 q_2^2 q_3 q_4+4 q_1 q_5 q_6+16 q_2 q_5 q_6,
    \\
    4 q_1^5+24 q_1 q_5 q_6+36 q_2 q_5 q_6-9 q_7 q_8,
    \\
    108 q_1 q_2^2 q_3 q_6+216 q_2^3 q_3 q_6-4 q_6^2 q_7+9 q_1 q_3^2 q_8-54 q_2 q_3^2 q_8,
    \\
    108 q_1 q_2^2 q_4 q_5+216 q_2^3 q_4 q_5+9 q_1 q_4^2 q_7-54 q_2 q_4^2 q_7-4 q_5^2 q_8,
    \\
    108 q_2^4 q_3 q_6+4 q_1 q_6^2 q_7+16 q_2 q_6^2 q_7-9 q_1^2 q_3^2 q_8+18 q_1 q_2 q_3^2 q_8-27 q_2^2 q_3^2 q_8,
    \\
    108 q_2^4 q_4 q_5-9 q_1^2 q_4^2 q_7+18 q_1 q_2 q_4^2 q_7-27 q_2^2 q_4^2 q_7+4 q_1 q_5^2 q_8+16 q_2 q_5^2 q_8.
\end{gather*}
Define the map $\Psi\co\R[M_0(A)]\to\R[M_0(B)]$ by
\begin{align*}
    \Psi (p_1)   &=  q_1 + 3 q_2,
    &
    \Psi (p_2)   &=  -\frac{3}{2} q_2,
    &
    \Psi (p_3)   &=  q_4,
    \\
    \Psi (p_4)   &=  -\frac{27}{8} q_3,
    &
    \Psi (p_5)   &=  q_6,
    &
    \Psi (p_6)   &=  -\frac{81}{16} q_5,
    \\
    \Psi (p_7)   &=  -\frac{2}{3} q_8,
    &
    \Psi (p_8)   &=  -\frac{729}{64} q_7.
\end{align*}
A tedious though elementary computation demonstrates that $\Psi$ maps the ideal of relations
of the $p_i$ into the ideal of relations of the $q_i$, and $\Psi^{-1}$ similarly maps the ideal
of relations of the $q_i$ into the ideal of relations of the $p_j$. Therefore,
$\Psi\co\R[M_0(A)]\to\R[M_0(B)]$ is an isomorphism. Note that $p_2 = z_3\overline{z_3} \geq 0$,
while $\Psi(p_2) = -3 q_2/2 \leq 0$ so that $\Psi$ does not preserve the inequalities.

To show that any graded isomorphism $\R[M_0(A)]\to\R[M_0(B)]$ fails to preserve the inequalities,
suppose for contradiction that $\Phi\co\R[M_0(A)]\to\R[M_0(B)]$ is such a graded isomorphism.
Let $\mathcal{Q}(A)$ and $\mathcal{Q}(B)$ denote the subalgebras of $\R[M_0(A)]$ and $\R[M_0(B)]$,
respectively, that are generated by elements of degree at most four. Then $\Phi$ restricts to an
isomorphism $\Phi|_{\mathcal{Q}(A)}\co\mathcal{Q}(A)\to\mathcal{Q}(B)$.

Again using \emph{Macaulay2} \cite{M2}, the algebra $\mathcal{Q}(A)$ generated by $p_1, p_2, \ldots, p_6$
has relations generated by
\begin{gather*}
    R_1 = p_1^2 p_2 - p_3 p_4, \quad\quad
    R_2 = 27 \big(4 p_2^3 (3 p_1+2 p_2)+(p_1+6 p_2) p_3 p_4\big) - 8 p_5 p_6,
    \\
    R_3 = -81 p_1^2 p_3 p_4-8 (-54 p_2^5+54 p_1 p_2 p_3 p_4+81 p_2^2 p_3 p_4-3 p_1 p_5 p_6+2 p_2 p_5 p_6),
    \\
    R_4 = 27 p_3 p_4 (p_1^3+12 p_1 p_2^2+8 p_2^3+6 p_3 p_4) - 8 p_1^2 p_5 p_6,
\end{gather*}
and the algebra $\mathcal{Q}(B)$ generated by $q_1, q_2, \ldots, q_6$ has relations generated by
\begin{gather*}
    R_1^\prime = 4 q_2 (q_1 + 3 q_2)^2 - 9 q_3 q_4, \quad\quad
    R_2^\prime = 108 q_2^3 (q_1 + 2 q_2) + 9 (q_1 - 6 q_2) q_3 q_4 - 4 q_5 q_6,
    \\
    R_3^\prime = 108 q_2^5 - 9 (q_1^2 - 2 q_1 q_2 + 3 q_2^2) q_3 q_4 + 4 (q_1 + 4 q_2) q_5 q_6,
    \\
    R_4^\prime = 9 q_1^3 q_3 q_4 - 4 (q_1 + 3 q_2)^2 q_5 q_6.
\end{gather*}
As $\Phi$ preserves the grading, it must be of the form
\begin{align}
    \nonumber
    \Phi(p_1)
        &=      c_{11} q_1 + c_{12} q_2,
    &
    \Phi(p_2)
        &=      c_{21} q_1 + c_{22} q_2,
    &
    \Phi(p_3)
        &=      c_{33} q_3 + c_{34} q_4,
    \\ \label{eq:DefPhi}
    \Phi(p_4)
        &=      c_{43} q_3 + c_{44} q_4,
    &
    \Phi(p_5)
        &=      c_{55} q_5 + c_{56} q_6,
    &
    \Phi(p_6)
        &=      c_{65} q_5 + c_{66} q_6,
    \\ \nonumber
    \Phi(p_7)
        &=      c_{77} q_7 + c_{78} q_8,
    &
    \Phi(p_8)
        &=      c_{87} q_7 + c_{88} q_8.
\end{align}
Using the fact that $\Phi$ preserves the grading and maps the ideal of relations for
the $p_i$ into the ideal of relations for the $q_i$, we must have
\[
    \Phi(R_1)   =   k_1 R_1^\prime,
    \quad\quad\mbox{and}\quad\quad
    \Phi(R_2)   =   k_2 R_2^\prime + k_3 q_1 R_1^\prime + k_4 q_2 R_2^\prime
\]
for some $k_1, k_2, k_3, k_4 \in \R$.
Computing the $q_1^3$, $q_3^2$, and $q_1^2 q_2$ coefficients of each side of the first equation
and the $q_1^2 q_2^2$, $q_2^4$, $q_1 q_2^3$, $q_2 q_3 q_4$, and $q_1 q_3 q_4$ coefficients of each side
of the second equation yields the system
\begin{align*}
    \hline
    \Phi(R_1):
    &&q_1^3:
    &&&c_{11}^2 c_{21} =      0,
    \\
    &&q_3^2:
    &&&c_{33} c_{43}   =      0,
    \\
    &&q_1^2 q_2:
    &&&c_{11} (2 c_{12} c_{21} + c_{11} c_{22})  =      4 k_1,
    \\
    \hline
    \Phi(R_2):
    &&q_1^2 q_2^2:
    &&&81 c_{21} c_{22} (3 c_{12} c_{21} + 3 c_{11} c_{22} + 4 c_{21} c_{22})
            =      k_2 (6 k_3 + k_4),
    \\
    &&q_1 q_2^3:
    &&&9 c_{22}^2 (9 c_{12} c_{21} + 3 c_{11} c_{22} + 8 c_{21} c_{22})
            =      k_2 (9 + 3 k_3 + 2 k_4),
    \\
    &&q_2^4:
    &&&3 c_{22}^3 (3 c_{12} + 2 c_{22})          =      k_2 (6 + k_4),
    \\
    &&q_1 q_3 q_4:
    &&&3(c_{11} + 6 c_{21}) (c_{34} c_{43} + c_{33} c_{44})
            =       k_2 (1 - k_3),
    \\
    &&q_2 q_3 q_4:
    &&&3 (c_{12} + 6 c_{22}) (c_{34} c_{43} + c_{33} c_{44})
            =      -k_2 (6 + k_4),
    \\ \hline
\end{align*}
Every solution of this system not corresponding to $\Phi(p_i) = 0$ for some $i$
satisfies $c_{11} = - 2 c_{22}/3$, $c_{12} = - 2 c_{22}$, and $c_{21} = 0$.
Hence, though $p_1 \geq 0$ and $p_2 \geq 0$, either $c_{22} > 0$ so that
$\Phi(p_1) = - 2 c_{22} (q_1/3 + q_2) < 0$ for any nonzero $q_1$ or $q_2$,
or $c_{22} < 0$ so that $\Phi(p_2) = c_{22} q_2 < 0$ for any nonzero $q_2$.
In either case, $\Phi$ does not preserve the inequalities describing the
semilagebraic sets $M_0(A)$ and $M_0(B)$.
\end{proof}

As another example, let $A^\prime = (-2,1,1)$, Type II$_2$
with $\alpha = 2$, $\beta = 1$, $c_1 = 1$, and $c_2 = 1$; and let
$B^\prime = (-1,2,1)$, Type II$_2$ with $\alpha = 1$, $\beta = 1$, $c_1 = 2$, and $c_2 = 1$.
As above, $\R[M_0(A^\prime)]$ and $\R[M_0(B^\prime)]$ have the same Hilbert series,
given by
\[
    \frac{1+2 t^2+4 t^3+2 t^4+t^6}{(1-t^3)^2 (1-t^2)^2}.
\]
The quadratic off-shell invariants of the action with weight matrix $A^\prime$ are spanned by
$z_1 \overline{z_1}$, $z_2 \overline{z_2}$, $z_3 \overline{z_3}$, $z_2 \overline{z_3}$, and
$z_3 \overline{z_2}$ with relation
$(z_2\overline{z_2}) (z_3\overline{z_3}) = (z_2\overline{z_3}) (z_3\overline{z_2})$,
and the moment map determines $z_1 \overline{z_1}$ in terms of
$z_2 \overline{z_2}$, $z_3 \overline{z_3}$.
For the action with weight matrix $B^\prime$, the quadratic off-shell invariants are generated by
$u_1 \overline{u_1}$, $u_2 \overline{u_2}$, $u_3 \overline{u_3}$, $u_1 u_3$, and $\overline{u_1} \overline{u_3}$ with relation
$(u_1 u_3)(\overline{u_1}\overline{u_3}) = (u_1 \overline{u_1}) (u_3 \overline{u_3})$,
and the moment map expresses $u_1 \overline{u_1} = 2u_2 \overline{u_2} + u_3 \overline{u_3}$. Considering only the Poisson brackets of the quadratics,
computations similar to those above demonstrate that
any graded Poisson isomorphism $\Phi\co\R[M_0(B^\prime)]\to\R[M_0(A^\prime)]$
must map
$u_3\overline{u_3}\mapsto c z_2 \overline{z_2} + (c-1) z_3\overline{z_3}
+ \sqrt{-1} d z_3 \overline{z_2}$ where $c \in \{0,1\}$ and $d\neq 0$.
For each $z_2,z_3\in\C$, there is a $z_1\in\C$ such that $(z_1,z_2,z_3)\in Z_{A^\prime}$
so that $z_3\overline{z_2}$ is not bounded by inequalities.
As $u_3\overline{u_3}, z_2\overline{z_2}, z_3\overline{z_3}\geq 0$,
it follows that $\Phi$ cannot preserve the inequalities.

Finally, we consider a closely related example that is not of Type I$_k$ nor II$_k$
for any $k$. Let
\[
    A^{\prime\prime}
    = \begin{pmatrix}
        -1 & 0 & 1 & 1 \\
        0 & -1 & 1 & 1
    \end{pmatrix}
    \quad\quad\mbox{and}\quad\quad
    B^{\prime\prime} =
    \begin{pmatrix}
        -1 & 0 & 1 & 1\\
        0 & -1 & 0 & 1
    \end{pmatrix}.
\]
To see that the Hilbert series of $\R[M_0(A^{\prime\prime})]$ and $\R[M_0(B^{\prime\prime})]$
coincide note that the cotangent-lifted weight matrix corresponding to $A^{\prime\prime}$,
\[
    \begin{pmatrix}
        -1 & 0 & 1 & 1 &|& 1 & 0 & -1 & -1 \\
        0 & -1 & 1 & 1 &|& 0 & 1 & -1 & -1
    \end{pmatrix},
\]
can be transformed into that of $B^{\prime\prime}$,
\[
    \begin{pmatrix}
        -1 & 0 & 1 & 1 &|& 1 & 0 & -1 & -1 \\
        0 & -1 & 0 & 1 &|& 0 & 1 &  0 & -1
    \end{pmatrix}
\]
by transposing the column pairs $(1,4)$, $(3,7)$, $(5,8)$ and row-reducing over $\Z$.
The common Hilbert series is given by
\[
    \frac{1 + 2t^2 + 2t^3 + 2t^4 + t^6}{(1 - t^3)^2(1 - t^2)^2}.
\]
The quadratic off-shell invariants associated to $A^{\prime\prime}$ are
$z_1\overline{z_1}$, $z_2\overline{z_2}$, $z_3\overline{z_3}$,
$z_4\overline{z_4}$, $z_3\overline{z_4}$, and $z_4\overline{z_3}$,
the moment map expresses $z_1\overline{z_1}$ and $z_2\overline{z_2}$
in terms of $z_3\overline{z_3}$ and $z_4\overline{z_4}$,
and we have the relation
$(z_3\overline{z_4})(z_4\overline{z_3}) = (z_3\overline{z_3})(z_4\overline{z_4})$.
Similarly, the quadratic off-shell invariants associated to $B^{\prime\prime}$ are
$z_1\overline{z_1}$, $z_2\overline{z_2}$, $z_3\overline{z_3}$,
$z_4\overline{z_4}$, $z_1 z_3$, and $\overline{z_1}\overline{z_3}$,
the moment map expresses $z_1\overline{z_1}$ and $z_2\overline{z_2}$
in terms of $z_3\overline{z_3}$ and $z_4\overline{z_4}$, and we have the relation
$(z_1 z_3)(\overline{z_1}\overline{z_3}) = (z_2\overline{z_2} + z_3\overline{z_3})(z_3\overline{z_3})$.
Hence, computations identical to those for $A^\prime$ and $B^\prime$ demonstrate that
the only Poisson isomorphisms
between the algebras $\R[M_0(A^{\prime\prime})]$ and $\R[M_0(B^{\prime\prime})]$
do not satisfy the semialgebraic conditions,
and hence do not correspond to a graded regular symplectomorphism.


\bibliographystyle{amsplain}
\bibliography{HLS-Torus}

\end{document}